\newcommand{\loc}{\mbox{loc}}
\newtheorem{theorem}{Theorem}[section]
\newtheorem{lemma}[theorem]{Lemma}
\newtheorem{proposition}[theorem]{Proposition}
\newtheorem{corollary}[theorem]{Corollary}
\theoremstyle{definition}
\newtheorem{remark}[theorem]{Remark}
\numberwithin{equation}{section}
\begin{document}

\title[Injectivity of twisted spherical means]
{Coxeter system of lines are sets of injectivity for the twisted spherical means on $\mathbb C$}

\author{ Rajesh K. Srivastava}
\address{School of Mathematics, Harish-Chandra Research Institute, Allahabad, India 211019.}
\email{rksri@hri.res.in}

\subjclass[2000]{Primary 43A85; Secondary 44A35}

\date{\today}

\dedicatory{Dedicated to Prof. E. M. Stein on the occasion of his 80th birthday.}

\keywords{Coxeter group, Hecke-Bochner identity, Heisenberg group,\\ Laguerre polynomials, spherical
harmonics, twisted convolution.}

\begin{abstract}
It is well known that a line in $\mathbb R^2$ is not a set of injectivity for
the spherical means for odd functions about that line. We prove that any line passing
through the origin is a set of injectivity for the twisted spherical means (TSM) for functions
$f\in L^2(\mathbb C),$ whose each of spectral projection $ e^{\frac{1}{4}|z|^2}f\times\varphi_k$
is a polynomial. Then, we prove that any Coxeter system of even number of lines is a set of
injectivity for the TSM for $L^q(\mathbb C),~1\leq q\leq2.$

\end{abstract}

\maketitle

\section{Introduction}\label{section1}
In an interesting result, Courant and Hilbert (\cite{CH}, p. 699) had proved that if
the circular averages of a function $f$ which is even with respect to a line $L,$ vanishes
over all circles centered at points of $L,$ then $f\equiv0.$
As a consequence of this result, the circular averages of a function $f$ vanish over all circles
centered at points of $L$ if and only if $f$ is odd with respect to $L,$ (see \cite{AQ}, Lemma 6.3).
Hence, any line $L$ in $\mathbb R^2$ is not a set of injectivity for the spherical means for
the odd functions about $L.$

\vspace{.021in}

However, these result does not continue to hold for injectivity of the
twisted spherical means on complex plane $\mathbb C,$ because of non-commutative
nature of underlying geometry of the Heisenberg group, (see \cite{BP, CCG, CDPT, H}).
The question, in general that any real analytic curve can be a set of injectivity
for the twisted spherical mean for $L^2(\mathbb C),$ is still an open problem.
However, we are able to prove the following partial results for the TSM.

\vspace{.021in}

Let $f\in L^2(\mathbb C)$ be such that for each $k\in\mathbb Z_{+}$ (set of non-negative integers),
the projection $e^{\frac{1}{4}|z|^2}f\times\varphi_k$ is a polynomial. This space consists of these
functions $f$ is much larger than $U(1)$- finite functions in $L^2(\mathbb C).$ Suppose the twisted spherical
mean $f\times\mu_r(x)$ of the function $f$ vanishes $\forall r>0$ and $\forall x\in\mathbb R.$
Then $f=0$ a.e. That is, the $X$-axis is a set of injectivity for the TSM on $\mathbb C.$

By rotation, it follows that any line passing through the origin is a set of injectivity for
the TSM. Since $f\times\varphi_k$ is a real analytic function, in the above case, we only need
the centers to be a sequence in $\mathbb R$ having a limit point.

\vspace{.021in}

With the same exponential condition, we observe that any curve $\gamma(t)=(\gamma_1(t),\gamma_2(t)),$
which passes through the origin, where $\gamma_j,~j=1,2$  are polynomials is also a set of
injectivity for the TSM. It is an interesting question, wether a real analytic curve is a set of
injectivity for the TSM for a smooth class of functions.

\vspace{.021in}

Further, to complete the arguments of our idea, we prove that any two perpendicular
lines is a set of injectivity for the TSM on $L^q(\mathbb C).$ Moreover, this result
implies that any Coxeter system of even number of lines is a set of injectivity for
the TSM on $L^q(\mathbb C).$ These results for the twisted spherical means are in
sharp contrast to the well known result for injectivity of the Euclidean spherical
means  on $\mathbb R^2,$ due to Agranovsky and Quinto \cite{AQ}.

\vspace{.021in}

On the basis of these striking results, it is therefore natural to ask, whether
any Coxeter system of odd number of lines can be a set of injectivity for the TSM.
We believe, our techniques with slight modifications would continue to work for any
Coxeter system of lines to be a set of injectivity for the TSM, which we leave open
for the time being.

\vspace{.021in}
In 1996, Agranovsky and Quinto have proved a major breakthrough result in the
\emph{integral geometry,} which completely characterizes the sets of injectivity
for the spherical means on the space of compactly supported
continuous functions on $\mathbb R^2.$ Their result says that the exceptional set
for the sets of injectivity is a very thin set which consists of a Coxeter system
of lines union finitely many points.

\vspace{.021in}

Let $\mu_r$ be the normalized surface measure on sphere $S_r(x).$ Let $\mathscr F\subseteq L^1_{\loc}(\mathbb R^n).$
We say that $S\subseteq\mathbb R^n$ is a set of injectivity for the spherical means for $\mathscr F$
if for $f\in\mathscr  F$ with $f\ast\mu_r(x)=0, \forall r>0$ and $\forall x\in S,$ implies $f=0$ a.e.

\begin{theorem}\label{th7}\cite{AQ}
A set $S\subset\mathbb R^2$ is a set of injectivity for the spherical means for
$C_c(\mathbb R^2)$ if and only if $S\nsubseteq\omega(\Sigma_N)\cup F,$ where $\omega$
is a rigid motion of $\mathbb R^2, \Sigma_N=\cup_{l=0}^{N-1}\{te^{\frac{i\pi l}{N}}: t\in\mathbb R\}$
is a  Coxeter system of $N$ lines and $F$ is a finite set in $\mathbb R^2.$
\end{theorem}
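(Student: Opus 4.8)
The plan is to prove the two directions separately: the construction of non-injectivity when $S\subseteq\omega(\Sigma_N)\cup F$ (the elementary half), and the structure theorem forcing this containment when $S$ fails to be a set of injectivity (the substantial half).

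For the first direction, after replacing $S$ by $\omega^{-1}(S)$ I may assume $\omega=\mathrm{id}$, so the $N$ lines of $\Sigma_N$ are concurrent at the origin, meet at equal angles $\pi/N$, and are precisely the mirrors of the dihedral Coxeter group $W=W(\Sigma_N)$. I would exhibit a nonzero $f\in C_c^\infty(\mathbb R^2)$ that is antisymmetric under $W$, i.e. $f\circ\sigma=-f$ for every reflection $\sigma\in W$; the model is $f(z)=g(|z|)\,\mathrm{Im}(z^N)$ with $g$ a nonzero radial bump, since $\mathrm{Im}(z^N)=r^N\sin(N\theta)$ is harmonic, vanishes on every mirror, and is odd across each of them. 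For a center $a$ on a mirror $L$, the reflection $\sigma_L$ fixes $a$, preserves the circle $S_r(a)$ and its measure, and sends $f$ to $-f$; hence $f\times\mu_r(a)=-f\times\mu_r(a)=0$ for all $r>0$, so the spherical means of $f$ vanish on all of $\Sigma_N$. To also annihilate the finitely many points of $F$ lying off the mirrors, I would work inside the infinite-dimensional space of $W$-antisymmetric functions and require each such point to lie in the zero set $A_f:=\{a: f\times\mu_r(a)=0\ \forall r\}$; since membership in $A_f$ is equivalent to the vanishing of the radial (angular-averaged) part of $f$ about that center, I expect these finitely many constraints to be met without exhausting the available degrees of freedom, which is the one extra dimension/moment argument needed on this side. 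Any $S\subseteq\omega(\Sigma_N)\cup F$ then lies in $A_f$, so $S$ is not a set of injectivity.

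For the converse I would argue contrapositively: assuming $f\in C_c(\mathbb R^2)$, $f\neq0$, with $f\times\mu_r(a)=0$ for all $r>0$ and all $a\in S$, I must show $S\subseteq\omega(\Sigma_N)\cup F$. Since $S\subseteq A_f$ it suffices to describe $A_f$. First, using the classical fact that any nonempty open set is a set of injectivity for $C_c$, the set $A_f$ has empty interior, hence is a closed \emph{thin} set. The crucial structural input is then that any full line $L\subseteq A_f$ is a mirror of antisymmetry of $f$: decomposing $f=f_{\mathrm{ev}}+f_{\mathrm{odd}}$ across $L$, the odd part has automatically vanishing means at every center of $L$ by the reflection argument above, so the even part inherits the vanishing; the Courant--Hilbert theorem (an even function about $L$ with vanishing spherical means over all circles centered on $L$ must vanish) forces $f_{\mathrm{ev}}=0$, i.e. $f\circ\sigma_L=-f$. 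Granting this, the Coxeter structure follows from rigidity of reflection groups. If $A_f$ contained two mirrors meeting at angle $\alpha$ with $\alpha/\pi$ irrational, then $f$ would be invariant under rotation by $2\alpha$ about their intersection point and hence, by density, radial about it; but a function radial about a point and odd about a line through that point vanishes, contradicting $f\neq0$. Two parallel mirrors generate a translation and force $f$ periodic, incompatible with compact support, and three non-concurrent mirrors generate an infinite Euclidean reflection group with infinitely many mirrors, again forcing $f=0$. Thus the mirrors in $A_f$ are concurrent and equiangular: after a rigid motion $\omega$ taking their common point to the origin they form exactly a Coxeter system $\Sigma_N$. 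Finally $A_f$ minus these lines is finite, because $f$ compactly supported makes $A_f$ bounded---for $|a|$ large every circle through $\mathrm{supp}\,f$ has radius close to $|a|$, and vanishing of all these means over a range of directions forces $f=0$.

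The main obstacle is precisely the step showing that the positive-dimensional part of the thin set $A_f$ is a union of \emph{straight} lines rather than arbitrary arcs, so that the Courant--Hilbert mirror analysis can be applied. This is the microlocal heart of the Agranovsky--Quinto argument: one regards the circular mean transform $f\mapsto(f\times\mu_r)$ as an elliptic Fourier integral operator and analyzes propagation of the wavefront set of $f$ under the associated canonical relation. Vanishing of all means along an arc $\gamma\subseteq A_f$ pins the conormal directions of the singularities of $f$ to the circles tangent to $\gamma$, and the requirement that these singularities cancel in the transform forces the reflection symmetry to hold infinitesimally along $\gamma$, whose only complete solutions are straight lines. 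Carrying out this wavefront computation, and combining it cleanly with the elementary Courant--Hilbert and group-rigidity steps, is where the real work lies.
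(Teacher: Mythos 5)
First, a point of comparison: the paper does not prove this statement at all. Theorem \ref{th7} is quoted from Agranovsky and Quinto \cite{AQ} purely as background, so there is no in-paper proof to measure your attempt against. Judged on its own terms, your outline is faithful to the broad strategy of \cite{AQ}: the easy direction via functions antisymmetric under the dihedral group, modelled on $g(|z|)\,\mathrm{Im}(z^N)$, and the hard direction via the structure of the zero set $A_f$, the Courant--Hilbert mirror argument for lines contained in $A_f$, and rigidity of planar reflection groups (irrational angles force radiality, parallel mirrors force periodicity, non-concurrent mirrors generate an infinite group, all incompatible with a nonzero compactly supported $f$).

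Nevertheless, as a proof the proposal has a genuine gap, and it is the one you yourself flag: the entire converse direction rests on showing that the positive-dimensional part of $A_f$ consists of \emph{straight lines}, and everything downstream (Courant--Hilbert, the group rigidity) presupposes it. You describe the mechanism --- treating the circular mean transform as an elliptic Fourier integral operator and analyzing how the wavefront set of $f$ must cancel along circles tangent to an arc of $A_f$ --- but you do not carry it out, and that computation is the bulk of \cite{AQ}. Two further steps are not merely unfinished but incorrect as stated. (i) In the easy direction, annihilating the finite set $F$ is not ``finitely many constraints'': membership of a point $a$ in $A_f$ is the vanishing of a whole function of $r$ (equivalently, of the full radialization of $f$ about $a$), i.e.\ infinitely many linear conditions per point, so one needs an actual construction inside the antisymmetric class rather than a codimension count. (ii) Your finiteness argument for $A_f$ off the mirrors cannot work as written: $A_f$ contains the full lines of $\Sigma_N$ and is therefore unbounded, and even boundedness together with empty interior would not yield finiteness (a bounded thin closed set can be a circle or a Cantor set); moreover, vanishing of all means at a \emph{single} far-away center never forces $f=0$, since no single point is a set of injectivity. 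The finiteness of the residual set in \cite{AQ} again comes out of the microlocal and analytic structure of $A_f$, not out of a support estimate.
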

In particular, any closed curve is a set of injectivity for $C_c(\mathbb R^2).$
In fact, Agranovsky et al. \cite{ABK} further prove that the boundary of any bounded domain
in $\mathbb R^n~(n\geq2)$ is set of injectivity for the spherical means on
$L^p(\mathbb R^n),$ with $1\leq p\leq\frac{2n}{n-1}.$ For $p>\frac{2n}{n-1},$
unit sphere $S^{n-1}$ is an example of non-injectivity set in $\mathbb R^n.$
This result has been generalized for certain weighted spherical means, see \cite{NRR}.
In general, the question of set of injectivity for the spherical means with real analytic
weight is still open. In \cite{NRR}, it has been shown that $S^{n-1}$ is a set of
injectivity for the spherical means with real analytic weights for the class of radial
functions.

\smallskip

An analogue of Theorem \ref{th7} in the higher dimensions is still open and appeared as a
conjecture in their work \cite{AQ}. It says that the sets of non-injectivity for the Euclidean
spherical means are contained in a certain algebraic variety. Following is their conjecture.

\smallskip

\noindent{\bf Conjecture \cite{AQ}.}
A set $S\subset\mathbb R^n$ is a set of injectivity for the spherical means for $C_c(\mathbb R^2)$
if and only if $S\nsubseteq\omega(\Sigma)\cup F,$ where $\omega$ is a rigid motion of $\mathbb R^n,$
$\Sigma$ is the zero set of a homogeneous harmonic polynomial and $F$ is an algebraic variety in
$\mathbb R^n$ of co-dimension at most $2.$

\smallskip

This conjecture remains unsolved, however a partial result related to this conjecture has
been proved by Kuchment et al. \cite{AK}. They also present a brief survey on the recent
development towards the above conjecture. However, in this article, we observe that this
conjecture does not continue to hold for the spherical means on the Heisenberg group
$\mathbb H^1=\mathbb C\times\mathbb R.$ In fact result on $\mathbb H^1$ is an adverse to
the Euclidean result, Theorem \ref{th7} on $\mathbb R^2.$

\bigskip

In more general, let $f\in L^1_{\loc}(\mathbb C^n)$ and write $S(f)=\{z\in\mathbb C^n:
f\times\mu_r(z)=0, \forall r>0\}.$ Our main problem is to describe completely the geometrical
structure of $S(f)$ that would characterize which ``sets" are set of injectivity for the TSM.
For example, let $f$ be a non zero type function $f(z)=\tilde{a}(|z|)P(z)\in L^2(\mathbb C^n)\cap C(\mathbb C^n),$
where $P\in H_{p,q}.$ Here $H_{p,q}$ is the space of homogeneous harmonic polynomials on $\mathbb C^n$
of type
\[P(z)=\sum_{|\alpha|=p,|\beta|=q}C_{\alpha\beta}z^\alpha\bar z^\beta.\]
Then $S(f)=P^{-1}(0)\cup F,$ where $F$ is the union of finitely many spheres
centered at the origin. This means a set $S\subset\mathbb C^n$ is set
of injectivity for twisted spherical means for type functions if and only if $S\nsubseteq P^{-1}(0).$
Since $P$ is harmonic, by maximal principle $P^{-1}(0)$ can not contain the boundary of any
bounded domain in $\mathbb C^n.$ Hence the boundary of any bounded domain would be a possible
candidate for set of injectivity for the TSM. The question that the boundary of the bounded
domain is a set of injectivity for the TSM has been taken up by many authors (see \cite{AR, NT1, ST}).
In a result of Narayanan and Thangavelu \cite{NT1}, it has been proved that the spheres centered
at the origin are sets of injectivity for the TSM on $\mathbb C^n.$ The author has generalized
the result of \cite{NT1} for certain weighted twisted spherical means, (see \cite{Sri}).
In general, the question of set of injectivity for the twisted spherical means (TSM)
with real analytic weight is still open.

\section{Notation and Preliminaries}\label{section2}
We define the twisted spherical means which arise in the study of spherical means on Heisenberg
group. The group $\mathbb H^n,$ as a manifold, is $\mathbb C^n \times\mathbb R$ with the group law
\[(z, t)(w, s)=(z+w,t+s+\frac{1}{2}\text{Im}(z.\bar{w})),~z,w\in\mathbb C^n\text{ and }t,s\in\mathbb R.\]
Let $\mu_s$ be the normalized surface measure on the sphere $ \{ (z,0):~|z|=s\} \subset \mathbb H^n.$
The spherical means of a function $f$ in $L^1(\mathbb H^n)$ are defined by
\begin{equation} \label{exp22}
f\ast\mu_s(z, t)=\int_{|w|=s}~f((z,t)(-w,0))~d\mu_s(w).
\end{equation}
Thus the spherical means can be thought of as convolution operators. An important technique in many
problem on $\mathbb H^n$ is to take partial Fourier transform in the $t$-variable to reduce matters
to $\mathbb C^n$. Let
\[f^\lambda(z)=\int_\mathbb R f(z,t)e^{i \lambda t} dt\]
be the inverse Fourier transform of $f$ in the $t$-variable. Then a simple calculation shows that

\begin{eqnarray*}
(f \ast \mu_s)^\lambda(z)&=&\int_{-\infty}^{~\infty}~f \ast \mu_s(z,t)e^{i\lambda t} dt\\
&=&\int_{|w| = s}~f^\lambda (z-w)e^{\frac{i\lambda}{2} \text{Im}(z.\bar{w})}~d\mu_s(w)\\
&=&f^\lambda\times_\lambda\mu_s(z),
\end{eqnarray*}
where $\mu_s$ is now being thought of as normalized surface measure
on the sphere $S_s(o)=\{z\in\mathbb C^n: |z|=s\}$ in $\mathbb C^n.$
Thus the spherical mean $f\ast \mu_s$ on the Heisenberg group can be
studied using the $\lambda$-twisted spherical mean $f^\lambda
\times_\lambda\mu_s$ on $\mathbb C^n.$ For $\lambda \neq 0,$
a further scaling argument shows that it is enough to study these
means for the case of $\lambda=1.$

Let $\mathscr F\subseteq L^1_{\loc}(\mathbb C^n).$  We say $S\subseteq\mathbb C^n$
is a set of injectivity for twisted spherical means for $\mathscr F$ if for
$f\in\mathscr F$ with $f\times\mu_r(z)=0, \forall r>0$ and $\forall z\in S,$
implies $f=0$ a.e.  The results on set of injectivity differ in the choice
of sets and the class of functions considered. We would like to refer to
\cite{AR, NT1, Sri}, for some results on the sets of injectivity for the TSM.

\bigskip

We need the following basic facts from the theory of bigraded
spherical harmonics (see \cite{T}, p.62 for details). We shall use
the notation $K=U(n)$ and $M=U(n-1).$ Then, $S^{2n-1}\cong K/M$ under
the map $kM\rightarrow k.e_n,$ $k\in U(n)$ and $e_n=(0,\ldots
,1)\in \mathbb C^n.$ Let $\hat{K}_M$ denote the set of all
equivalence classes of irreducible unitary representations of $K$
which have a nonzero $M$-fixed vector. It is known that each
representation in $\hat{K}_M$ has a unique nonzero $M$-fixed vector,
up to a scalar multiple.

For a $\delta\in\hat{K}_M,$ which is realized on $V_{\delta},$ let
$\{e_1,\ldots, e_{d(\delta)}\}$ be an orthonormal basis of
$V_{\delta}$ with $e_1$ as the $M$-fixed vector. Let
$t_{ij}^{\delta}(k)=\langle e_i,\delta (k)e_j \rangle ,$ $k\in K$
and $\langle , \rangle$ stand for the innerproduct on $V_{\delta}.$
By Peter-Weyl theorem, it follows that $\{\sqrt{d(\delta
)}t_{j1}^{\delta}:1\leq j\leq d(\delta ),\delta\in\hat{K}_M\}$ is an
orthonormal basis of $L^2(K/M)$ (see \cite{T}, p.14 for details).
Define $Y_j^{\delta} (\omega )=\sqrt{d(\delta )}t_{j1}^{\delta}(k),$
where $\omega =k.e_n\in S^{2n-1},$ $k \in K.$ It then follows that
$\{Y_j^{\delta}:1\leq j\leq d(\delta ),\delta\in \hat{K}_M, \}$
forms an orthonormal basis for $L^2(S^{2n-1}).$

For our purpose, we need a concrete realization of the
representations in $\hat{K}_M,$ which can be done in the following
way. See \cite{Ru}, p.253, for details.
For $p,q\in\mathbb Z_+$, let $P_{p,q}$ denote the space of all
polynomials $P$ in $z$ and $\bar{z}$ of the form
$$
P(z)=\sum_{|\alpha|=p}\sum_{|\beta|=q}c_{\alpha\beta} z^\alpha
\bar{z}^\beta.
 $$
Let $H_{p,q}=\{P\in P_{p,q}:\Delta P=0\}.$ The elements of $H_{p,q}$ are
called the bigraded solid harmonics on $\mathbb C^n.$ The group  $K$
acts on $H_{p,q}$ in a natural way. It is easy to see that the space
$H_{p,q}$ is $K$-invariant. Let $\pi_{p,q}$ denote the corresponding
representation of $K$ on $H_{p,q}.$ Then representations in
$\hat{K}_M$ can be identified, up to unitary equivalence, with the
collection $\{\pi_{p,q}: p,q \in \mathbb Z_+\}.$

Define the bigraded spherical harmonic by $Y_j^{p,q}(\omega)=\sqrt{d(p,q )}t_{j1}^{p,q}(k).$
Then $\{Y_j^{p,q}:1\leq j\leq d(p,q),p,q \in \mathbb Z_+ \}$ forms an orthonormal basis for
$L^2(S^{2n-1}).$ Therefore, for a continuous function $f$ on $\mathbb C^n,$ writing
$z=\rho \,\omega,$ where $\rho>0$ and $\omega \in S^{2n-1},$ we can expand the function $f$
in terms of spherical harmonics as
\begin{equation}\label{Bexp4}
f(\rho\omega)=\sum_{p,q\geq0}\sum_{j=1}^{d(p,q)}a_j^{p,q}(\rho)Y_j^{p,q}(\omega),
\end{equation}
where the series on the right-hand side converges uniformly on every compact set
$K\subseteq\mathbb C^n.$ The functions $a_j^{p,q}$ are called the spherical
harmonic coefficients of $f$ and function $a^{p,q}(\rho)Y^{p,q}(\omega)$ is
known as the type function.

\smallskip

We need the Hecke-Bochner identity for the spectral projections $f\times\varphi_k^{n-1},$
for function  $f\in L^2(\mathbb C^n).$ See \cite{T}, p.70. For $k\in\mathbb Z_+,$ the
Laguerre functions $\varphi_k^{n-1}$ is defined by
$\varphi_k^{n-1}(z)=L_k^{n-1}(\frac{1}{2}|z|^2)e^{-\frac{1}{4}|z|^2},$ where
\[L_k^{n-1}(x)=\sum_{j=0}^k(-1)^j\binom{k+n-1}{k-j}\frac{x^j}{j!},\] is the
Laguerre polynomial of degree $k$ and order $n-1.$

\begin{lemma} \label{lemma8}
Let $\tilde aP\in L^2(\mathbb C^n),$ where $\tilde a$ is a radial function and $P\in H_{p,q}$.
Then
\begin{eqnarray}\label{exp2}
\tilde aP\times\varphi_{j}^{n-1}(z)
&=&(2\pi)^{n}\left\langle\tilde a, \varphi_{k-p}^{n+p+q-1}\right\rangle P(z)\varphi_{k-p}^{n+p+q-1}(z)\nonumber\\
&=&(2\pi)^{-n}P(z)~\tilde a\times\varphi_{k-p}^{n+p+q-1}(z),
\end{eqnarray}
if ~$k\geq p$ and ~$0$ otherwise. The convolution in the right hand side is on
the space $~\mathbb C^{n+p+q}.$
\end{lemma}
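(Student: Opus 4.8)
The plan is to separate the identity into a purely radial statement and a genuinely harmonic statement, and to reduce every twisted convolution to one of Gaussians via generating functions. Throughout write $\times_m$ for the twisted convolution on $\mathbb C^m$.

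\textbf{Reduction to a dimension shift.} The two displayed equalities in \eqref{exp2} encode two different facts, and I would treat them separately. The equivalence of the two right-hand sides is the \emph{radial} (Bochner) part: for radial $g\in L^2(\mathbb C^{m})$ the Laguerre functions act as minimal idempotents, $\varphi_\ell^{m-1}\times_m\varphi_{\ell'}^{m-1}=(2\pi)^m\delta_{\ell\ell'}\varphi_\ell^{m-1}$, so $g\times_m\varphi_\ell^{m-1}$ is automatically a scalar multiple of $\varphi_\ell^{m-1}$, the scalar being, up to the normalizing power of $2\pi$, the Laguerre coefficient $\langle g,\varphi_\ell^{m-1}\rangle$. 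Taking $m=n+p+q$ and $\ell=k-p$ turns $\tilde a\times_{n+p+q}\varphi_{k-p}^{n+p+q-1}$ into the advertised multiple of $\varphi_{k-p}^{n+p+q-1}$ and passes from the second line to the first. Hence the whole content is the \emph{Hecke} part, the dimension-shift intertwining
\[
(\tilde a\,P)\times_n\varphi_k^{n-1}=(2\pi)^{-n}\,P\cdot\bigl(\tilde a\times_{n+p+q}\varphi_{k-p}^{n+p+q-1}\bigr),\qquad k\ge p,
\]
and $0$ for $k<p$.

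\textbf{Generating functions and Gaussians.} To prove the intertwining I would expand $\tilde a$ in the Laguerre basis and, by bilinearity, reduce to the generating function $g_r^{(n)}(z)=\sum_{\ell\ge0}r^\ell\varphi_\ell^{n-1}(z)=(1-r)^{-n}e^{-\frac14\frac{1+r}{1-r}|z|^2}$, a Gaussian. Thus it suffices to evaluate $(P\,g_r^{(n)})\times_n g_s^{(n)}$ and compare coefficients of $r^j s^k$ on the two sides. The left-hand side is a twisted convolution of a harmonic-polynomial-times-Gaussian against a Gaussian; carrying out the Gaussian $w$-integral against the phase $e^{\frac i2\mathrm{Im}(z\cdot\bar w)}$, and using that $P$ is harmonic so that it survives the integration as an overall factor $P(z)$, the result is $P(z)$ times an explicit scalar-valued Gaussian in $z$ depending on $(r,s)$. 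Re-expanding that Gaussian in the basis $\{\varphi_\ell^{n+p+q-1}\}$ of dimension $n+p+q$ and collecting powers of $s$ exhibits the index shift $k\mapsto k-p$, and, since the lowest surviving power is governed by the $z$-degree of $P$, forces the vanishing for $k<p$. Matching the $r$-series against the Laguerre coefficients of $\tilde a$ then assembles the stated formula, with the dimension-$(n+p+q)$ inner product appearing exactly as the higher-dimensional reproducing pairing.

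\textbf{Main obstacle.} I expect the decisive difficulty to be the explicit Gaussian computation in the presence of the non-commutative phase, together with the bookkeeping of the bidegree $(p,q)$: one must show that the phase, integrated against a Gaussian, returns $P$ evaluated \emph{at $z$} with a clean constant rather than a mixture of $z$ and $w$, and that the holomorphic degree $p$ (the $z$-degree of $P$) lowers the Laguerre index while the total degree $p+q$ raises the dimension. The harmonicity of $P$ and the $K=U(n)$-covariance of twisted convolution are the two levers that make this collapse occur; in particular, since $H_{p,q}$ is $K$-irreducible and both sides transform under the same representation, it is enough to verify the Gaussian identity on a single highest-weight monomial of $H_{p,q}$, after which irreducibility propagates it to all $P\in H_{p,q}$ and linearity in $\tilde a$ finishes the proof.
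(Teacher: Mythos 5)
This lemma is not proved in the paper at all: it is the classical Hecke--Bochner identity for the spectral projections, quoted verbatim from Thangavelu (\cite{T}, p.~70; the result goes back to Geller \cite{Ge}). So there is no in-paper argument to compare yours against; I can only assess your sketch on its own terms.

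Your decomposition is the right one and matches the standard proofs in spirit. The first step --- that for radial $g$ on $\mathbb C^m$ the relation $\varphi_\ell^{m-1}\times\varphi_{\ell'}^{m-1}=(2\pi)^m\delta_{\ell\ell'}\varphi_\ell^{m-1}$ forces $g\times\varphi_\ell^{m-1}$ to be the Laguerre coefficient of $g$ times $\varphi_\ell^{m-1}$ --- is correct and does reduce everything to the dimension-shift intertwining. (You should, however, track the constants: with the convolution taken on $\mathbb C^{n+p+q}$ the idempotent relation produces $(2\pi)^{n+p+q}$, and reconciling the two displayed lines of \eqref{exp2} is not automatic; indeed the normalization in the statement as printed, and the mismatch $j$ versus $k$ on the left, suggest the constants need care.) The second step is where the entire content lives, and as written it is a plan rather than a proof: you assert that integrating $P(z-w)$ against the Gaussian-times-phase returns ``$P(z)$ with a clean constant,'' but the actual mechanism is that completing the square centres the Gaussian at a genuinely complex multiple $cz$ of $z$, so the mean-value property of the (bigraded, hence genuinely harmonic on $\mathbb R^{2n}$) polynomial gives $P((1-c)z)=(1-c)^p\overline{(1-c)}^q\,P(z)$; it is precisely the asymmetry between $(1-c)^p$ and $\overline{(1-c)}^q$ that shifts the Laguerre index by $p$ rather than by $q$ and kills the terms with $k<p$. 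That computation is the whole lemma and you have not carried it out. The closing appeal to $U(n)$-irreducibility of $H_{p,q}$ is legitimate but optional. You might also note the alternative, arguably cleaner route used in the literature: both sides are eigenfunctions of the special Hermite operator with eigenvalue $2k+n$ transforming under $\pi_{p,q}$, that joint eigenspace is one-dimensional and spanned by $P(z)\varphi_{k-p}^{n+p+q-1}(z)$, and the constant is then fixed by a single inner product.
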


\section{Sets of injectivity for the twisted spherical means}\label{section3}
In this section, we prove that the $X$-axis is a set of
injectivity for the TSM for a certain class of functions in
$L^2(\mathbb C).$ Then, we replicate the method to prove that
$X$-axis together with $Y$-axis is a set of injectivity for the
TSM for $L^q(\mathbb C).$ In the later case, we deduce a density
result for $L^p(\mathbb C),~2\leq p<\infty.$

\bigskip

Since Laguerre function $\varphi_k^{n-1}$ is an eigenfunction of the special
Hermite operator $A=-\Delta_z+\frac{1}{4}|z|^2,$ with eigenvalue $2k+n,$ the
projection $f\times\varphi_k^{n-1}$ is also an eigenfunction of $A$ with
eigenvalue $2k+n.$ As $A$ is an elliptic operator and eigenfunction of an
elliptic operator is real analytic \cite{N}, the projection $f\times\varphi_k^{n-1}$
must be a real analytic function on $\mathbb C^n.$ The real analyticity of
$f\times\varphi_k^{n-1}$ can also be understood by the fact that
$f\times\varphi_k^{n-1}$ can be extended to a holomorphic function
on $\mathbb C^{2n}.$ Therefore, any determining set for the real analytic functions
is a set of injectivity for the TSM on $L^q(\mathbb C^n)$ with $1\leq q\leq\infty.$
For details on determining sets for real analytic functions, see \cite{PS, RS}.
Next, we find an expansion for $f\times\varphi_k^0$ with help of Hecke-Bochner
identities for spectral projection.

\begin{proposition}\label{prop1}
Let $f\in L^2(\mathbb C).$ Then the real analytic expansion of $Q_k(z)=f\times\varphi_k^0(z)$
can be written as
\begin{equation}\label{exp5}
Q_k(z)=\sum_{p=0}^kC_{k-p}^{p0}z^p\varphi_{k-p}^{p}(z)+\sum_{q=0}^\infty C_{k}^{0q}\bar z^q\varphi_{k}^{q}(z).
\end{equation}
\end{proposition}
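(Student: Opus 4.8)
The plan is to compute $Q_k=f\times\varphi_k^0$ by expanding $f$ in bigraded spherical harmonics and then applying the Hecke--Bochner identity (Lemma \ref{lemma8}) term by term. The decisive simplification is special to $n=1$: on $\mathbb C=\mathbb R^2$ the Laplacian factors as $\Delta=4\partial_z\partial_{\bar z}$, so the monomial $z^p\bar z^q$ is harmonic precisely when $p=0$ or $q=0$. Hence $H_{p,q}=\{0\}$ unless one index vanishes, and the only surviving solid harmonics are the one-dimensional spaces $H_{p,0}=\mathbb C\,z^p$ and $H_{0,q}=\mathbb C\,\bar z^q$. Writing $z=\rho e^{i\theta}$, the expansion \eqref{Bexp4} therefore collapses to the ordinary Fourier series in $\theta$, and I would group $f$ as $f=\sum_{p\ge0}\tilde a_p(\rho)\,z^p+\sum_{q\ge1}\tilde b_q(\rho)\,\bar z^q$, that is, as a sum of type functions $\tilde a_pP$ with $P=z^p\in H_{p,0}$ together with type functions $\tilde b_qP$ with $P=\bar z^q\in H_{0,q}$, the radial ($m=0$) mode being counted once.

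Next I would apply Lemma \ref{lemma8} to each piece with $n=1$, so that $n+p+q-1=p+q$ and $(2\pi)^n=2\pi$. For the holomorphic pieces $P=z^p$ (where the lemma's second index is $0$) the identity gives $\tilde a_p z^p\times\varphi_k^0(z)=2\pi\langle\tilde a_p,\varphi_{k-p}^{\,p}\rangle\,z^p\varphi_{k-p}^{\,p}(z)$ when $k\ge p$ and $0$ otherwise; this vanishing is exactly what truncates the first sum at $p=k$. For the antiholomorphic pieces $P=\bar z^q$ (where the lemma's first, holomorphic, index is $0$) the constraint $k\ge p$ becomes $k\ge0$ and is automatic, so every $q$ contributes and one obtains $\tilde b_q\bar z^q\times\varphi_k^0(z)=2\pi\langle\tilde b_q,\varphi_{k}^{\,q}\rangle\,\bar z^q\varphi_{k}^{\,q}(z)$. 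Setting $C_{k-p}^{p0}=2\pi\langle\tilde a_p,\varphi_{k-p}^{\,p}\rangle$ and $C_k^{0q}=2\pi\langle\tilde b_q,\varphi_k^{\,q}\rangle$ and summing then reproduces precisely \eqref{exp5}, the first (finite) sum coming from the $H_{p,0}$ sector and the second (infinite) sum from the $H_{0,q}$ sector.

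The main obstacle is analytic rather than algebraic: justifying that the twisted convolution $f\times\varphi_k^0$ may be evaluated term by term against the spherical-harmonic expansion of $f$, and that the resulting series converges to the real analytic function $Q_k$. Since $f\in L^2(\mathbb C)$, the coefficients $\tilde a_p,\tilde b_q$ are $L^2$ radial functions and Lemma \ref{lemma8} applies to each summand; I would obtain the interchange of $\sum$ and $\times\varphi_k^0$ from the $L^2$-boundedness of twisted convolution with $\varphi_k^0$ together with the orthogonality of distinct bigraded components, which decouples the projection so the $(p,0)$ and $(0,q)$ sectors do not interfere. Convergence of the first, finite sum is immediate, while for the infinite sum I would control $\sum_q\abs{C_k^{0q}}^2$ through Bessel's inequality for the $\varphi_k^{\,q}$-expansion. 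Finally, the real analyticity of $Q_k$ recorded before the proposition upgrades the $L^2$ identity to a pointwise one, so \eqref{exp5} holds everywhere.
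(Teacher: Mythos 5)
Your algebraic derivation coincides with the paper's: the same collapse of the bigraded harmonics on $\mathbb C$ to $z^p$ and $\bar z^q$, the same term-by-term application of Lemma \ref{lemma8} with $n=1$, and the same identification of the coefficients, yielding \eqref{exp5} as an identity in $L^2(\mathbb C)$. The gap is in your final step. The proposition asserts that the right-hand side of \eqref{exp5} is the \emph{real analytic expansion} of $Q_k$, i.e.\ that the series converges locally uniformly to a real analytic function equal to $Q_k$ everywhere. Bessel's inequality only gives $\sum_q\abs{C_k^{0q}}^2\,\|\bar z^q\varphi_k^q\|_{L^2}^2<\infty$, and the sentence ``the real analyticity of $Q_k$ upgrades the $L^2$ identity to a pointwise one'' is a non sequitur: real analyticity of the limit function says nothing about whether the partial sums of the series converge at a given point, let alone uniformly. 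To conclude, you must show that the tail $h(z)=\sum_{q>k}C_k^{0q}\bar z^q\varphi_k^q(z)$ converges uniformly on every ball $B_R(o)$; only then is the right-hand side a continuous (indeed real analytic) function agreeing a.e., hence everywhere, with the real analytic $Q_k$.

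That locally uniform convergence is the substantive analytic content of the paper's proof, and it requires quantitative input beyond square-summability: one computes $\|\bar z^q\varphi_k^q\|_{L^2(\mathbb C)}^2=2\pi\,2^q(k+q)!/k!$ as in \eqref{exp27}, deduces from $h\in L^2(\mathbb C)$ the coefficient decay $\abs{C_k^{0q}}\leq C\bigl(k!/(2^{q+1}(k+q)!)\bigr)^{1/2}$ of \eqref{exp25}, and then bounds $\abs{\bar z^q\varphi_k^q(z)}$ on $\abs{z}\leq R$ using the crude estimate $\abs{L_k^q(\tfrac12\abs{z}^2)}\leq \binom{q+k}{k}\sum_{j\le k}(\tfrac12\abs{z}^2)^j/j!$, so that the resulting majorant $\sum_q\bigl((q+k)!/(2^{q+1}k!\,q!)\bigr)^{1/2}R^q/(q!)^{1/2}$ is a convergent numerical series. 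Your outline omits this computation entirely, so as written it establishes \eqref{exp5} only as an $L^2$ identity, not as the claimed real analytic expansion.
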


\begin{proof}
We know that
\[f(z)=\sum_{p=0}^\infty a^{p0}(|z|)z^p+ \sum_{q=0}^\infty a^{0q}(|z|)\bar z^q, \text{ for }z\in\mathbb C.\]
Since $f\in L^2(\mathbb C),$ using the Hecke-Bochner identity for the spectral projections
as in Lemma \ref{lemma8}, we can express $f\times\varphi_k^0(z)$ as
\[Q_k(z)=f\times\varphi_k^0(z)=
\sum_{p=0}^kC_{k-p}^{p0}z^p\varphi_{k-p}^{p}(z)+\sum_{q=0}^\infty C_{k}^{0q}\bar z^q\varphi_{k}^{q}(z),\]
where the series on the right-hand side converges to $Q_k$ in $L^2(\mathbb C).$
In order to show that the series converges uniformly on every compact set
$K\subseteq\mathbb C,$ it is enough to show that the series
\[h(z)=\sum_{q=k+1}^\infty C_{k}^{0q}\bar z^q\varphi_{k}^{q}(z),\]
converges uniformly on every ball $B_R(o)$ in $\mathbb C.$ Since $Q_k\in L^2(\mathbb C),$
it follows that $h\in L^2(\mathbb C)$ and
\[\|h\|^2_{L^2(\mathbb C)}=
\sum_{q=k+1}^\infty |C_{k}^{0q}|^2\|\bar z^q\varphi_{k}^{q}\|^2_{L^2(\mathbb C)}<\infty.\]
Since
\begin{eqnarray}\label{exp27}
\|\bar z^q\varphi_{k}^{q}\|^2_{L^2(\mathbb C)}
&=&\int_0^\infty\int_{S^1}|r\omega|^{2q}\left(\varphi_k^{q}\right)^2rdrd\omega\nonumber\\
&=&2\pi\int_0^\infty\left(\varphi_k^{q+1-1}\right)^2r^{2(q+1)-1}dr\nonumber\\
&=&2\pi~2^q\frac{(k+q)!}{k!}.
\end{eqnarray}
Therefore, the coefficients $C_k^{0q}$'s must satisfy an estimate of type
\begin{equation}\label{exp25}
|C_k^{0q}|\leq C\left(\frac{k!}{2^{q+1}(k+q)!}\right)^{\frac{1}{2}},
\end{equation}
where $C$ is a constant and independent of $q.$
Now, let $|z|\leq R.$ Then, we have
\begin{eqnarray*}
|h(z)|&\leq& e^{-\frac{1}{4}|z|^2}\sum_{q=k+1}^\infty|C_k^{0q}||z|^q
\left|\sum_{j=0}^k(-1)^j\binom{q+k}{k-j}\frac{({\frac{1}{2}|z|^2)}^j}{j!}\right|\\
&\leq& Ce^{-\frac{1}{4}|z|^2}\sum_{q=k+1}^\infty\left(\frac{k!}{2^{q+1}(k+q)!}\right)^{\frac{1}{2}}
|z|^q\frac{(q+k)!}{k!q!}\sum_{j=0}^k\frac{({\frac{1}{2}|z|^2)}^j}{j!}\\
&\leq& Ce^{-\frac{1}{4}|z|^2}
\sum_{q=k+1}^\infty\left(\frac{(q+k)!}{2^{q+1}k!q!}\right)^{\frac{1}{2}}\frac{|z|^q}{(q!)^{\frac{1}{2}}}e^{\frac{1}{2}|z|^2}\\
&\leq& Ce^{\frac{1}{4}R^2}
\sum_{q=k+1}^\infty\left(\frac{(q+k)!}{2^{q+1}k!q!}\right)^{\frac{1}{2}}\frac{R^q}{(q!)^{\frac{1}{2}}}<\infty.
\end{eqnarray*}
Thus the function $h$ is real analytic on $\mathbb C.$ That is, the right-hand side of
(\ref{exp5}) is a real analytic function which agreeing to the real analytic function $Q_k$
a.e. on $\mathbb C.$ Hence (\ref{exp5}) is a real analytic expansion of $Q_k.$
\end{proof}

We would like to call (\ref{exp5}) the \emph {Hecke-Bochner-Laguerre series} for the
spectral projections. We study this series carefully and use it to prove the most
striking results, Theorems (\ref{th1},~\ref{th4}) of this article.

\begin{theorem}\label{th2}
Let $f\in L^2(\mathbb C)$. Suppose $f\times\mu_r(x)=0, \forall~r>0$  and
$ \forall~x\in\mathbb R.$ Then $f\times\varphi_0^0\equiv f\times\varphi_1^0\equiv0$
on $\mathbb C.$
\end{theorem}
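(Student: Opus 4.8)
The plan is to reduce the hypothesis, which only controls $f\times\mu_r$ along the $X$-axis, to the statement that each projection $Q_k=f\times\varphi_k^0$ vanishes on $\mathbb{R}$, and then to exploit the very explicit shape of $Q_0$ and $Q_1$ furnished by the Hecke--Bochner--Laguerre series \eqref{exp5}.

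First I would record the eigenrelation for the twisted spherical means. Since $\mu_r$ and $\varphi_k^0$ are both radial they commute under twisted convolution and $\mu_r\times\varphi_k^0=\varphi_k^0(r)\,\varphi_k^0$, the eigenvalue being read off at the origin as $\varphi_k^0(r)/\varphi_k^0(0)=\varphi_k^0(r)$ (recall $\varphi_k^0(0)=1$). Combining this with the special Hermite expansion $f=\tfrac1{2\pi}\sum_k f\times\varphi_k^0$ and the associativity of $\times$ gives $f\times\mu_r=\tfrac1{2\pi}\sum_k\varphi_k^0(r)Q_k$. To isolate a single projection I would integrate against the dual weight: the functions $r\mapsto\varphi_k^0(r)$ are orthonormal in $L^2((0,\infty),r\,dr)$ (substitute $s=\tfrac12 r^2$ and use the orthogonality of the Laguerre polynomials), so the radial object $\Phi_k=\int_0^\infty\mu_r\,\varphi_k^0(r)\,r\,dr$ satisfies $\Phi_k\times\varphi_j^0=\delta_{jk}\varphi_j^0=\tfrac1{2\pi}\varphi_k^0\times\varphi_j^0$ for every $j$; since the $\varphi_j^0$ exhaust the radial spectrum this forces $\Phi_k=\tfrac1{2\pi}\varphi_k^0$, whence $f\times\Phi_k=\tfrac1{2\pi}Q_k$. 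On the other hand the hypothesis yields, for each $x\in\mathbb{R}$, $(f\times\Phi_k)(x)=\int_0^\infty(f\times\mu_r)(x)\,\varphi_k^0(r)\,r\,dr=0$. Therefore $Q_k(x)=0$ for all $k$ and all $x\in\mathbb{R}$; in particular $Q_0$ and $Q_1$ vanish on the $X$-axis.

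Next I would feed this into \eqref{exp5}. For $k=0$ only order-zero Laguerre functions occur ($L_0^\alpha\equiv1$), so $Q_0(z)=e^{-\frac14|z|^2}\sum_{q\ge0}C_0^{0q}\bar z^q$. On the axis $\bar z=x$, hence $\sum_q C_0^{0q}x^q\equiv0$ on $\mathbb{R}$; being a convergent power series vanishing on a set with a limit point, all $C_0^{0q}=0$ and $Q_0\equiv0$. For $k=1$ one computes from \eqref{exp5}, using $\varphi_1^q(z)=\bigl((q+1)-\tfrac12|z|^2\bigr)e^{-\frac14|z|^2}$, that $Q_1(z)=e^{-\frac14|z|^2}\bigl[A(\bar z)+zB(\bar z)\bigr]$, where $A,B$ are power series in $\bar z$ whose coefficients are built from $C_1^{00}$, $C_0^{10}$ and the $C_1^{0q}$. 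Restricting to $z=\bar z=x$ forces $A(x)+xB(x)\equiv0$; comparing coefficients kills $C_1^{00}$ and, inductively, all even-index $C_1^{0q}$, ties $C_0^{10}$ to $C_1^{01}$, and leaves the odd-index coefficients determined by $C_1^{01}$ through the recursion $(q+1)C_1^{0q}=\tfrac12 C_1^{0,q-2}$, that is $C_1^{0,2j+1}=C_1^{01}/(4^{j}(j+1)!)$.

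The last and decisive step is that this lone surviving parameter is incompatible with $f\in L^2(\mathbb{C})$. Membership forces the decay \eqref{exp25}; equivalently, by \eqref{exp27}, $\sum_q|C_1^{0q}|^2\,2^{q}(q+1)!<\infty$. Substituting the recursion, the $(2j+1)$-st term is proportional to $\binom{2j+2}{j+1}/4^{j}\sim c/\sqrt{j}$, so the series diverges unless $C_1^{01}=0$. Hence every coefficient of $Q_1$ vanishes and $Q_1\equiv0$, which completes the proof. I expect the main obstacle to be exactly this point: the vanishing of $Q_1$ on the axis is by itself insufficient, as it leaves a one-parameter family of formal solutions, and only the $L^2$ growth restriction eliminates it. This is also what confines the argument to $k=0,1$: for $k\ge2$ the holomorphic part of \eqref{exp5} contributes several independent parameters no longer controlled by the $L^2$ bound alone, which is precisely what necessitates the additional polynomial hypothesis in the main theorem.
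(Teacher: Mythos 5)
Your proposal is correct and follows essentially the same route as the paper: reduce the hypothesis to $Q_k\equiv 0$ on $\mathbb R$ by polar decomposition, expand $Q_0,Q_1$ via the Hecke--Bochner--Laguerre series, compare coefficients on the axis to kill $Q_0$ and to derive the recursion $C_1^{0,2m+1}=C_1^{01}/(2^{2m}(m+1)!)$, and then show this is incompatible with $f\in L^2(\mathbb C)$ because $\sum_m |C_1^{0,2m+1}|^2\,\|\bar z^{2m+1}\varphi_1^{2m+1}\|_2^2$ diverges. The only (immaterial) difference is that you establish the divergence via the central binomial asymptotic $\binom{2j+2}{j+1}/4^{j}\sim c/\sqrt{j}$ where the paper invokes Raabe's test.
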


\begin{proof}
Since $f\times\mu_r(x)=0, ~\forall r>0$ and $ \forall x\in\mathbb R,$ by polar decomposition,
it follows that $Q_k(x)=f\times\varphi_k^0(x)=0, ~\forall x\in\mathbb R$ and $\forall k\in\mathbb Z_+.$
For $k=0,$ we have
\[Q_0(x)=C_{0}^{00}\varphi_{0}^{0}(x)+C_{0}^{00}\varphi_{0}^{0}(x)+C_{0}^{01}x\varphi_{0}^{1}(x)+
C_{0}^{02}x^2\varphi_{0}^{2}(x)+\cdots=0,
~\forall~x\in\mathbb R.\]
On equating the coefficients of $1,x,x^2,\ldots$ to zero, we get $C_0^{0q}=0, \forall q\geq0.$
Hence $Q_0\equiv0$ on $\mathbb C.$
From Equation (\ref{exp5}), we have
\[Q_1(x)=C_{1}^{00}\varphi_{1}^{0}(x)+C_{0}^{10}x\varphi_{0}^{1}(x)+
\sum_{q=0}^\infty C_{1}^{0q}x^q\varphi_{1}^{q}(x)=0,~\forall~x\in\mathbb R.\]
Using the argument $x\rightarrow-x,$ it follows that
\[2C_{1}^{00}\varphi_{1}^{0}(x)+\sum_{m=0}^\infty C_{1}^{0,2m}x^{2m}\varphi_{1}^{2m}(x)=0.\]
By equating coefficient of $1, x^2, x^4,\ldots,$  we get $C_1^{0,2m}=0,$ for $~m=0,1,2,\ldots.$
Hence the series of $Q_1(x)$ reduces to
\[Q_1(x)=C_{0}^{1,0}x\varphi_{0}^{1}(x)+\sum_{m=0}^\infty C_{1}^{0,2m+1}x^{2m+1}\varphi_{1}^{2m+1}(x)=0.\]
By canceling $e^{-\frac{1}{4}x^2}$ in the above series, we have
\[C_{0}^{1,0}x+\sum_{m=0}^\infty C_{1}^{0,2m+1}x^{2m+1}{\left(2m+2-\frac{1}{2}x^2\right)}=0.\]
On equation the coefficients of $x,x^3,x^5,\ldots,$ we get the following recursion relations
\begin{equation}\label{exp26}
C_0^{1,0}=-2C_1^{0,1} ~\text{and}~ C_1^{0,2m+1}=\frac{C_1^{0,1}}{2^{2m}(m+1)!};~\text{for}~ m=1,2,3,\ldots.
\end{equation}
Now, we can write $Q_1(z)=C_{0}^{1,0}z\varphi_{0}^{1}(z)+h(z),$
where the series
\[h(z)=\sum_{m=0}^\infty C_{1}^{0,2m+1}\bar z^{2m+1}\varphi_{1}^{2m+1}(z)\]
converges in $L^2(\mathbb C).$ We claim that all the coefficients
$C_1^{0,2m+1};~m=0,1,2,\ldots$ are zero. Here two cases arises.
If $h_1(z)$ has finitely many non-zero coefficients. Then $Q_1(z)$ is polynomial
times Gaussian and hence the condition $Q_1(x)=0, \forall x\in \mathbb R,$
by equating the coefficient of highest degree term to zero, implies that each
coefficient has to be zero. (Please see the proof of Theorem \ref{th1}.)
On the other hand, suppose infinitely many of these coefficients are non-zero. Then,
by the estimate (\ref{exp27}) and the recursion relations (\ref{exp26}), we have
\begin{eqnarray*}
\|h\|^2_{L^2(\mathbb C)}&=&\sum_{m=0}^{\infty}\left|C_1^{0,2m+1}\right|^2
\left\|\bar z^{2m+1}\varphi_1^{2m+1}\right\|^2_{L^2(\mathbb C)}\\
&=&2\pi\left|C_1^{0,1}\right|^2\sum_{m=0}^{\infty}\frac{2^{2m+1}(2m+2)!}{\left(2^{2m}(m+1)!\right)^2}\\
&=&4\pi\left|C_1^{0,1}\right|^2\sum_{m=0}^{\infty}\frac{(2m+2)!}{2^{2m}\left((m+1)!\right)^2}=\sum_{m=0}^\infty
 b_m=\infty.
\end{eqnarray*}
The series on the right-hand side diverges by Raabe's test. (See, \cite{K}, p.36).
Since
\[\lim_{m\rightarrow\infty}\left\{m\left(\frac{b_m}{b_{m+1}}-1\right)\right\}=-\frac{1}{2}<1.\]
This contradicts the fact that the series $h$ is $L^2(\mathbb C)$ summable.
Thus, we get $C_1^{0,2m+1}=0,$ for $m=0,1,2,\ldots.$ Hence, we conclude that $Q_1\equiv0.$
\end{proof}

\begin{remark}\label{rk3}
Under the same assumptions as in Theorem \ref{th2},  it would be interesting to know,
whether $Q_k\equiv0$ for $k\geq2.$ The argument used to show $Q_1\equiv0$ does not
seem to work in this case.
In another attempt, using the recursion relations $L_k^n=L_k^{n-1}+\cdots+L_0^{n-1},~
L_k^{n}-L_{k-1}^n=L_k^{n-1}$ and the result that $f\times\varphi_0^0=f\times\varphi_1^0=0,$
we can easily deduce that $f\times\varphi_2^0=f\times\varphi_2^1.$ But, we are not able to
conclude any thing more on account of the facts that $f\times\varphi_2^0$ is an eigenfunction
of the special Hermite operator $A$ and $\varphi_2^1=\varphi_2^0+\varphi_1^0+\varphi_0^0.$

\smallskip

However, we prove the following partial result that any line passing through the origin is a
set of injectivity for the TSM for a certain class of functions in $L^2(\mathbb C).$ Since
for any $\sigma\in U(n),$ we have $~f\times\mu_r(\sigma.z)=(\pi(\sigma)f)\times\mu_r(z).$ It
follows that a set $S\subset\mathbb C$ is a set of injectivity for the TSM if and only if for
each $\sigma\in U(n),$ the set $\sigma.S$  is a set of injectivity for the TSM.
In view of this, it is enough to prove that the $X$-axis is a set of injectivity for the TSM.
\end{remark}

\begin{theorem}\label{th1}
Let $f\in L^2(\mathbb C)$ and for each $k\in\mathbb Z_+$ the projection
 $e^{\frac{1}{4}|z|^2}f\times\varphi_k^0$ is a polynomial. Suppose
$f\times\mu_r(x)=0, \forall~r>0$  and  ~$\forall~x\in\mathbb R.$ Then $f=0$ a.e.
\end{theorem}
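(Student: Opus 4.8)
The plan is to show that the hypotheses force every spectral projection $Q_k=f\times\varphi_k^0$ to vanish identically; since the spectral projections determine $f$ via the special Hermite expansion $f=(2\pi)^{-1}\sum_{k\ge0}f\times\varphi_k^0$ in $L^2(\mathbb C)$, this yields $f=0$ a.e. First I would reduce the circle averages to the projections exactly as in the proof of Theorem \ref{th2}: by polar decomposition, twisted convolution with the radial kernel $\varphi_k^0$ is an average of the $f\times\mu_r$, so $f\times\mu_r(x)=0$ for all $r>0$ gives $Q_k(x)=f\times\varphi_k^0(x)=0$ for every $x\in\mathbb R$ and every $k\in\mathbb Z_+$. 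The whole problem thus becomes: deduce $Q_k\equiv0$ from the vanishing of $Q_k$ on the real axis, now using the extra polynomial hypothesis.

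Next I would exploit the polynomial hypothesis to truncate the Hecke--Bochner--Laguerre series \eqref{exp5}. Multiplying \eqref{exp5} by $e^{\frac14|z|^2}$ gives $e^{\frac14|z|^2}Q_k(z)=\sum_{p=0}^kC_{k-p}^{p0}z^pL_{k-p}^p(\tfrac12|z|^2)+\sum_{q\ge0}C_k^{0q}\bar z^qL_k^q(\tfrac12|z|^2)$. Every monomial occurring in $\bar z^qL_k^q(\tfrac12|z|^2)$ has the form $z^j\bar z^{j+q}$, i.e. a fixed antiholomorphic excess $q$; hence the families attached to distinct $q$ are linearly independent and there can be no cancellation between them. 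Consequently the assumption that $e^{\frac14|z|^2}Q_k$ is a polynomial forces all but finitely many of the $C_k^{0q}$ to vanish, and I let $N_k$ denote the largest index with $C_k^{0,N_k}\ne0$.

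Now I would restrict to $z=x\in\mathbb R$ and cancel the nonvanishing factor $e^{-\frac14x^2}$, turning $Q_k(x)=0$ into the polynomial identity $\sum_{p=0}^kC_{k-p}^{p0}x^pL_{k-p}^p(\tfrac12x^2)+\sum_{q=0}^{N_k}C_k^{0q}x^qL_k^q(\tfrac12x^2)=0$ for all $x\in\mathbb R$. The decisive observation is a separation of leading degrees: the $p$-th holomorphic block $x^pL_{k-p}^p(\tfrac12x^2)$ has degree $2k-p$ and the $q$-th antiholomorphic block $x^qL_k^q(\tfrac12x^2)$ has degree $2k+q$, each with nonzero leading coefficient coming from the top Laguerre coefficient $(-1)^{k-p}/((k-p)!\,2^{k-p})$, respectively $(-1)^{k}/(k!\,2^{k})$. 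Thus the blocks have pairwise distinct leading degrees, the only coincidence being at degree $2k$, where the holomorphic $p=0$ and antiholomorphic $q=0$ contributions are the same radial term $\varphi_k^0$. Reading off the coefficient of the top power $x^{2k+N_k}$ isolates $C_k^{0,N_k}$ and forces it to vanish; if $N_k\ge1$ this contradicts the choice of $N_k$, so the antiholomorphic part collapses to its radial term. Peeling the identity downward through $x^{2k},x^{2k-1},\dots,x^{k}$ then kills the radial coefficient and successively each $C_{k-p}^{p0}$, giving $Q_k\equiv0$ for every $k$, and hence $f=0$.

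The heart of the matter, and the point where the present hypothesis is indispensable, is the truncation in the second step. As Remark \ref{rk3} notes, the parity-and-recursion argument that handled $Q_0,Q_1$ in Theorem \ref{th2} does not extend to $k\ge2$, essentially because the antiholomorphic series is then genuinely infinite and has no top degree to peel from; the $L^2$/Raabe summability contradiction used for $Q_1$ is precisely a substitute for that missing finiteness. The polynomial assumption supplies the finiteness directly, after which the leading-degree separation is uniform in $k$. I expect the two items requiring genuine care to be the linear independence of the excess-$q$ families (which justifies the truncation) and the verification that the Laguerre blocks have pairwise distinct leading degrees; both are elementary, but they are exactly what make the clean peeling argument legitimate.
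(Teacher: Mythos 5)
Your proposal is correct and follows essentially the same route as the paper: reduce to $Q_k(x)=0$ on $\mathbb R$ by polar decomposition, use the polynomial hypothesis to truncate the antiholomorphic part of the Hecke--Bochner--Laguerre series \eqref{exp5} at some finite $q=m(k)$, and then peel coefficients of the resulting polynomial identity from the top degree $x^{2k+m}$ down through $x^{2k},\dots,x^k$ to kill first the $C_k^{0q}$ and then the $C_{k-p}^{p0}$. The only cosmetic difference is that you justify the truncation by linear independence of the monomial families $z^j\bar z^{j+q}$, whereas the paper phrases the same fact via orthogonality of $e^{in\theta}$ for $z=te^{i\theta}$.
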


\begin{proof}
Since $f\in L^2(\mathbb C)$, by polar decomposition, the condition $f\times\mu_r(x)=0, \forall~r>0$
and $ \forall~x\in\mathbb R$ is equivalent to $f\times\varphi_k^0(x)=0, \forall~k\geq0$
and $ \forall~x\in\mathbb R.$
From Equation (\ref{exp5}), we have
\[f\times\varphi_k^0(z)=\sum_{p=0}^kC_{k-p}^{p0}z^p\varphi_{k-p}^{p}(z)+
\sum_{q=0}^\infty C_{k}^{0q}\bar z^q\varphi_{k}^{q}(z).\]
By the given exponential condition, we can write
$f\times\varphi_k^0(z)=P(z, \bar z)e^{-\frac{1}{4}|z|^2}.$
Let $z=te^{i\theta}.$ Then for each fixed $t,$ the function $f\times\varphi_k^0(te^{i\theta})$
is a trigonometric polynomial. Using the orthogonality of $e^{i n\theta}$
it follows that there exist $m=m(k)\in\mathbb Z_+$ such that
\begin{equation}\label{exp7}
f\times\varphi_k^0(z)=\sum_{p=0}^kC_{k-p}^{p0}z^p\varphi_{k-p}^{p}(z)+
\sum_{q=0}^mC_{k}^{0q}\bar z^q\varphi_{k}^{q}(z).
\end{equation}
 Therefore, for each $k\in\mathbb Z_+,$ we have
\[\sum_{p=0}^kC_{k-p}^{p0}x^p\varphi_{k-p}^{p}(x)+
\sum_{q=0}^mC_{k}^{0q}x^q\varphi_{k}^{q}(x)=0, \forall~x\in\mathbb R.\]
The constant term in the above expansion $2C_k^{00}=0,$ hence
\[\sum_{p=1}^kC_{k-p}^{p0}x^p\varphi_{k-p}^{p}(x)+
\sum_{q=1}^mC_{k}^{0q}x^q\varphi_{k}^{q}(x)=0, \forall~x\in\mathbb R.\]
On equating the coefficient of the highest degree term $x^{m+2k}$ to zero,
we get $C_k^{0m}=0.$ Similarly, continuing this argument up to $x^{2k},$
we obtained $C_k^{0m}=C_k^{0(m-1)}=\cdots=C_k^{01}=0.$ Then equate the
coefficients of $x,x^2,\ldots,x^{2k-1}$ to zero, we find
$C_{k-1}^{10}=C_{k-2}^{20}=\cdots=C_0^{p0}=0.$ Thus $f\times\varphi_k^{0}\equiv0, \forall k\geq0.$
Hence $f=0$ a.e. This completes the proof.
\end{proof}

\begin{remark}\label{rk5}
$(a).$ Since $f\times\varphi_k$ is real analytic and the zero set of a real analytic function
is isolated, in this case, we only need the centres to be a sequence in $\mathbb R$ having a
limit point. It is clear from (\ref{exp7}) that any curve $\gamma:=\{(\gamma_1(t),\gamma_2(t)): t\in\mathbb R\},$
where $\gamma_j,~j=1,2$ are polynomials is also a set of injectivity for the TSM.
A natural question is that $\gamma:=(\gamma_1,\gamma_2)$ with $\gamma_j$'s are real
analytic is a set of injectivity for the TSM. We believe that this will help in
characterizing non-injectivity sets for TSM.

\bigskip

$(b).$ Let us consider the functions
\[f_m(z)=\sum_{p=0}^\infty a^{p0}(|z|)z^p+ \sum_{q=0}^m a^{0q}(|z|)\bar z^q .\]
Then by the similar argument as in the proof of Theorem \ref{th2}, we can easily
deduce that $e^{\frac{1}{4}|z|^2}f_m\times\varphi_k^0$ is a polynomial. In fact
these are the only functions for which $e^{\frac{1}{4}|z|^2}f\times\varphi_k^0$
is a polynomial. This can be seen from Equation (\ref{exp7}), when we reverse the
process using the Hecke-Bochner identities. Thus the space considered is just not
empty, it includes all the sequence $(f_m)$ which converges to
\[f(z)=\sum_{p=0}^\infty a^{p0}(|z|)z^p+ \sum_{q=0}^\infty a^{0q}(|z|)\bar z^q \]
in $L^2(\mathbb C).$

We believe that the proof of Theorem \ref{th1}, without
exponential condition on spectral projection would need a finer argument and
hence we prefer to return to this question later.
\end{remark}

Next, we prove the stronger result that $X$-axis together with $Y$-axis is a set of
injectivity for the TSM for any function in $L^q(\mathbb C).$

For $\eta\in\mathbb C,$ define the left twisted translate by
$$\tau_\eta f(\xi)=f(\xi-\eta)e^{\frac{i}{2}\text{Im}(\eta.\bar\xi)}.$$
Then $\tau_{\eta}(f\times\mu_r)=\tau_{\eta}f\times\mu_r.$ Let $S$ be a set of
injectivity for the TSM on $L^q(\mathbb C).$ Suppose $f\times\mu_r(z-\eta)=0,\forall r>0$
and $\forall z\in S.$ Then
\[\tau_{\eta}f\times\mu_r(z)=\tau_{\eta}(f\times\mu_r)(z)=
e^{\frac{i}{2}\text{Im}(\eta.\bar{z})}f\times\mu_r(z-\eta)=0,\]
for all $r>0$ and $\forall z\in S.$  Since the space $L^q(\mathbb C)$ is twisted
translations invariant, it follows that a set $S\subset\mathbb C$ is set of
injectivity for the TSM if and only if for each $\eta\in\mathbb C,$ the set $S-\eta$
is a set of injectivity for the TSM. That is, the Euclidean translate of the set $S$
is also a set of injectivity for the TSM on $L^q(\mathbb C).$ By rotation and translation,
it is obvious that any two perpendicular lines can be set of injectivity for the TSM,
provided $X$-axis and $Y$-axis together is a set of injectivity for the TSM.

\begin{theorem}\label{th4}
Let $f\in L^q(\mathbb C),$ for $1\leq q\leq2.$ Suppose $f\times\mu_r(x)=f\times\mu_r(ix)=0, \forall~r>0$
and $ \forall~x\in\mathbb R.$ Then $f=0$ a.e.
\end{theorem}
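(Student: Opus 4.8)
The plan is to reduce everything to the spectral projections $Q_k(z)=f\times\varphi_k^0(z)$ and to show each vanishes identically; since $f=(2\pi)^{-1}\sum_k f\times\varphi_k^0$, this forces $f=0$ a.e. First I would record that the hypothesis $1\le q\le 2$ is exactly what returns $Q_k$ to $L^2$: because $\varphi_k^0$ is Schwartz and twisted convolution obeys the same Young inequalities as ordinary convolution (via $|f\times g|\le|f|\ast|g|$), one gets $Q_k\in L^q\cap L^\infty\subset L^2$. Hence the $L^2$ coefficient bound underlying \eqref{exp27} is available and I may argue in the spirit of Theorem \ref{th2}. By the polar decomposition of $\mu_r$ into Laguerre functions (as used already in Theorem \ref{th1}), the hypotheses $f\times\mu_r(x)=f\times\mu_r(ix)=0$ for all $r>0$ are equivalent to $Q_k(x)=Q_k(ix)=0$ for every $x\in\mathbb R$ and every $k$; that is, each real-analytic $Q_k$ vanishes on the four rays $\{i^m x:x\ge 0\}$, $m=0,1,2,3$.

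Next I would feed in the Hecke-Bochner-Laguerre expansion \eqref{exp5} of Proposition \ref{prop1},
\[
Q_k(z)=\sum_{p=0}^k C_{k-p}^{p0}\,z^{p}\varphi_{k-p}^{p}(z)+\sum_{q=0}^\infty C_{k}^{0q}\,\bar z^{q}\varphi_{k}^{q}(z),
\]
whose holomorphic part carries the angular frequencies $0,\dots,k$ and whose (infinite) antiholomorphic part carries the frequencies $0,-1,-2,\dots$. Averaging the four vanishing conditions against the characters $m\mapsto i^{mn}$ projects onto residues modulo $4$: for each $j\in\{0,1,2,3\}$ I obtain a real-analytic identity on $\mathbb R$ pairing the finitely many holomorphic coefficients $C_{k-p}^{p0}$ with $p\equiv j$ and the antiholomorphic coefficients $C_{k}^{0q}$ with $q\equiv -j\ (\mathrm{mod}\ 4)$. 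Cancelling the common Gaussian $e^{-x^2/4}$ turns each identity into a convergent power series in $x$ vanishing identically, so every coefficient is $0$. Since the holomorphic terms have $x$-degree at most $2k$ while the $q$-th antiholomorphic term has degree $q+2k$, the coefficients of $x^N$ with $N>2k$ involve only the $C_k^{0q}$ and yield, in each residue class, a homogeneous linear recurrence whose leading coefficient $\binom{k+N}{k}$ is nonzero. This recurrence is sparser than the one in Theorem \ref{th2}, advancing in steps of $4$ rather than $2$, precisely because we now exploit two perpendicular lines rather than one; for $k=1$ it already forces $C_1^{0q}=0$ for large $q$ outright, with no further analysis.

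The hard part will be the growth step for general $k$. The passage from separation modulo $2$ (one axis) to separation modulo $4$ (two axes) is what I expect to eliminate the fast-decaying solutions of the recurrence that obstruct injectivity on a single line; what remains is to prove that \emph{every} nonzero solution of the surviving order-$\sim\! k/2$ recurrence decays too slowly to satisfy the weighted summability bound $\sum_q |C_k^{0q}|^2\,2^{q}(k+q)!/k!<\infty$ coming from \eqref{exp27}. I would extract the leading asymptotics of the recurrence, obtaining a factorial-type decay that is nonetheless defeated by the factorially growing weight, and run a Raabe/ratio test exactly as in Theorem \ref{th2} to force divergence unless all $C_k^{0q}$ vanish; doing this uniformly in $k$, while controlling the subleading recurrence terms and, if needed, invoking the low-degree ($N\le 2k$) equations to rule out any summable subdominant mode, is the genuine obstacle. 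Once the antiholomorphic coefficients are gone, the remaining finite holomorphic sum $\sum_{p}C_{k-p}^{p0}x^{p}L_{k-p}^{p}(\tfrac12 x^{2})=0$ has terms of distinct top degrees $2k-p$, so equating highest degrees kills each $C_{k-p}^{p0}$ in turn. Thus $Q_k\equiv 0$ for every $k$, and summing the spectral projections gives $f=0$ a.e.
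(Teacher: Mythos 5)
Your setup is sound and matches the paper's: reduce to $L^2$ (the paper mollifies with a radial, compactly supported approximate identity rather than invoking Young's inequality, but either works), pass to the projections $Q_k$, expand via \eqref{exp5}, and split the four conditions $Q_k(i^mx)=0$ into residue classes modulo $4$ --- this is exactly the paper's add-and-subtract decomposition into the odd and even series $U_k$, $V_k$ and then into the classes $A_1$, $A_2$. The gap is in what you do next. You read the coefficient equations from the top down, isolate the degrees $N>2k$ to get a recurrence of order roughly $k/2$ among the $C_k^{0q}$, and then plan to kill its entire solution space by a Raabe/summability argument against the weight in \eqref{exp27}. You correctly flag this as ``the genuine obstacle'' --- and you do not carry it out, so as written the proposal does not prove the theorem. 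Moreover the step is both unnecessary and doubtful as posed: the recurrence does not have constant coefficients, its solution space is $\sim k/2$-dimensional, and nothing you say excludes a rapidly decaying solution. The Raabe argument of Theorem \ref{th2} is needed there only because a single line leaves the one-parameter family \eqref{exp26} to exclude; with two perpendicular lines no such family survives.

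The idea you are missing is to read the system from the bottom up. Within a fixed residue class mod $4$, the term indexed by $q$ contributes degrees $q,q+2,\ldots,q+2k$ with lowest coefficient $L_k^q(0)=\binom{k+q}{k}\neq0$, and consecutive indices in the class differ by $4$; so equating the coefficients of $x^q$ in increasing order yields a lower-triangular system with nonzero diagonal, and all coefficients vanish by induction, with no growth analysis at all. The only genuine coupling occurs for even $p\le k$, where $C_{k-p}^{p0}$ and $C_k^{0p}$ fall into the same class and share the same minimal degree $x^p$; your proposal never addresses this, and your plan to first eliminate the antiholomorphic tail and only afterwards treat the ``remaining finite holomorphic sum'' cannot be executed, because these pairs are coupled from the lowest degree on. The paper resolves each such pair by using the two equations at degrees $x^p$ and $x^{p+2}$ (which involve no other unknowns, the next index being $p+4$) and checking that the matrix $\left(\begin{smallmatrix} L_{k-p}^p(0) & L_k^p(0)\\ (L_{k-p}^p)'(0) & (L_k^p)'(0)\end{smallmatrix}\right)$ is nonsingular, as in \eqref{exp16}. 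That nonsingularity is the real (and only) delicate point of the proof, and it is absent from your proposal.
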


Let $f\in L^q(\mathbb C).$ Then by convolving $f$ with a right and radial compactly supported smooth
approximate identity, we can assume $f\in L^2(\mathbb C).$
Let us consider the Hecke-Bochner-Laguerre series
\begin{equation}\label{exp6}
Q_k(z)=\sum_{p=1}^k\left(C_{k-p}^{p0}z^p\varphi_{k-p}^{p}(z)+C_{k}^{0p}\bar z^p\varphi_{k}^{p}(z)\right)
+\sum_{p=k+1}^\infty C_{k}^{0p}\bar z^p\varphi_{k}^{p}(z).
\end{equation}
The proof is now based on symmetries and then cancelations. We decompose
the above series into four (disjoint) series, each of which after equating
its coefficients to zero, gives a system of solvable recursion relations. Using these
recursion relations together with some basic properties of Laguerre polynomials,
we show that all the coefficients appeared in the series (\ref{exp6}) are zero.

\smallskip

Let $\mathbb E_+$ and $\mathbb O_+$ denote the sets of even and odd positive
integers respectively. Let $E_k=\mathbb E_+\cap\{1,2,\ldots, k\},~
F_k=\mathbb E_+\cap\{k+1,k+2,\ldots\},~G_k=\mathbb O_+\cap\{1,2,\ldots, k\}$ and
$H_k=\mathbb O_+\cap\{k+1,k+2,\ldots\}.$ Then, we can decompose the above series
as $Q_k(z)=U_k(z)+V_k(z^2),$ where
\[U_k(z)=\sum_{p\in G_k}\left(C_{k-p}^{p0}z^p\varphi_{k-p}^{p}(z)+C_{k}^{0p}\bar z^p\varphi_{k}^{p}(z)\right)
+\sum_{p\in H_k} C_{k}^{0p}\bar z^p\varphi_{k}^{p}\]
and
\[V_k(z^2)=\sum_{p\in E_k}\left(C_{k-p}^{p0}z^p\varphi_{k-p}^{p}(z)+C_{k}^{0p}\bar z^p\varphi_{k}^{p}(z)\right)
+\sum_{p\in F_k} C_{k}^{0p}\bar z^p\varphi_{k}^{p}.\]
We shall call $U_k$ and $V_k$ as odd and even series respectively.
For $x\in\mathbb R,$ it is given that $U_k(x)+V_k(x^2)=0.$  Using
the argument $x\rightarrow -x,$ we have $U_k(x)=V_k(x^2)=0.$
Similarly, $U_k(ix)+V_k((ix)^2)=0,$ implies $U_k(ix)=V_k((ix)^2)=0.$
Indeed, we can put these conditions as follow.

\begin{description}
  \item[(A)] $U_k(x)=U_k(ix)=0\text{ and }$
  \item[(B)] $V_k(x^2)=V_k((ix)^2)=0.$
\end{description}
In order to prove Theorem \ref{th4}, we prove that $Q_k\equiv0, \forall k\geq0.$
Now, we divide the proof into two parts: $0\leq k\leq3$ and $k\geq4.$

\begin{lemma}\label{lemma9}
Let $f\in L^2(\mathbb C)$ and $0\leq k\leq3.$ Suppose $Q_k(x)=Q_k(ix)=0, ~\forall x\in\mathbb R.$
Then $Q_k\equiv0.$
\end{lemma}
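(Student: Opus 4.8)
The plan is to start from the Hecke--Bochner--Laguerre expansion \eqref{exp6} together with its splitting $Q_k=U_k+V_k$ into the odd series $U_k$ and the even series $V_k$, and to exploit the symmetry conditions (A) and (B) that the vanishing of $Q_k$ on the real and the imaginary axes imposes. Since $0\le k\le 3$, the ``holomorphic'' pieces — those carrying the coefficients $C_{k-p}^{p0}$, indexed by $p\in G_k$ and $p\in E_k$ — consist of at most two terms each, so the whole difficulty is concentrated in the infinite antiholomorphic tails $\sum_{p\in H_k}C_k^{0p}\bar z^p\varphi_k^p$ and $\sum_{p\in F_k}C_k^{0p}\bar z^p\varphi_k^p$. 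For $k=0$ and $k=1$ there is no genuinely new content: both are exactly the computations already carried out in Theorem \ref{th2}. The substance of the lemma is the cases $k=2$ and $k=3$, where, as noted in Remark \ref{rk3}, the single-axis argument of Theorem \ref{th2} fails and the imaginary axis must be brought in.

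The decoupling mechanism is as follows. On the real axis $z^p=\bar z^p=x^p$, whereas on the imaginary axis $z^p=i^p x^p$ and $\bar z^p=(-i)^p x^p$, while the radial Laguerre factor $\varphi_k^p$ is unchanged since $|z|^2=x^2$ on both axes. Because $(-i)^p=-i^p$ for odd $p$, the holomorphic and antiholomorphic contributions of a given degree acquire opposite phases on the imaginary axis; consequently the pair of identities (A) furnishes, for each relevant power of $x$, a second linear relation that separates the holomorphic coefficients $C_{k-p}^{p0}$ from the antiholomorphic ones $C_k^{0p}$, and the analogous manipulation of (B) handles the even degrees. After cancelling the common Gaussian $e^{-\frac14 x^2}$, each resulting identity is a power series in $x$ that must vanish identically; equating coefficients fixes the finitely many holomorphic coefficients ($p\le k\le3$) directly and yields recursion relations, of the same shape as \eqref{exp26}, expressing every antiholomorphic tail coefficient in terms of a single leading one.

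The last and hardest step is to show that these leading tail coefficients vanish, and here I would argue exactly as in Theorem \ref{th2}: substituting the recursion relations into the explicit $L^2$-norm formula \eqref{exp27} shows that a nonzero leading tail coefficient forces a series $\sum_m b_m$ whose divergence is detected by Raabe's test, contradicting $Q_k\in L^2(\mathbb C)$. The delicate point, and the main obstacle, is that for $k=2,3$ the Laguerre polynomials $L_k^p$ are no longer constant, so each basis function $\bar z^p\varphi_k^p$ now spreads over several adjacent powers of $x$; this couples neighbouring coefficients in the recursion and makes both the solution of the recursion and the asymptotic estimate feeding Raabe's test more intricate than in the $k=1$ case. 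Once the tails are annihilated, only the finite holomorphic-plus-low-index system survives, and solving it directly forces all remaining coefficients to zero, whence $Q_k\equiv0$ for $0\le k\le3$.
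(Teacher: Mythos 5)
Your overall strategy coincides with the paper's: expand $Q_k$ as the Hecke--Bochner--Laguerre series \eqref{exp6}, split it as $Q_k=U_k+V_k$, impose conditions (A) and (B), and use the opposite phases that $z^p$ and $\bar z^p$ acquire on the imaginary axis to decouple the holomorphic from the antiholomorphic coefficients. Up to that point you have reconstructed the proof. The problem is your endgame. You assert that after decoupling one is left with recursion relations ``of the same shape as \eqref{exp26}'' expressing every tail coefficient in terms of a free leading one, and that the ``last and hardest step'' is to kill that leading coefficient by the $L^2$-norm/Raabe's test argument of Theorem \ref{th2}. That step does not exist, and you do not carry it out. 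Adding and subtracting the two identities in (A) (resp.\ (B)) separates the indices $p$ into residue classes mod $4$, so within each resulting series consecutive basis functions $x^p\varphi_k^p$ and $x^{p+4}\varphi_k^{p+4}$ have strictly increasing \emph{minimal} degree ($p$ versus $p+4$), while each contributes to only finitely many powers of $x$. Equating coefficients starting from the lowest power therefore meets exactly one new unknown at each stage, multiplied by the nonzero constant $L_k^p(0)=\binom{p+k}{k}$, and annihilates it outright; e.g.\ for $k=2$ the added equation gives $C_1^{10}=0$ from the $x$-coefficient, then $C_2^{03}=0$ from the $x^3$-coefficient, then $C_2^{07}=0$, and so on. No free parameter survives, so no summability or divergence argument is needed. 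The Raabe's test device was required in Theorem \ref{th2} precisely because with only one axis the holomorphic term and the entire antiholomorphic tail remain entangled in a single recursion with one undetermined constant; the whole point of adjoining the imaginary axis is that this entanglement disappears.

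As written, your proposal therefore has a genuine gap at the place you yourself flag as hardest: you would need to (i) actually derive the recursion for $k=2,3$, where, as you note, $L_k^p$ is no longer linear and adjacent coefficients couple through several powers of $x$, and (ii) verify that Raabe's test detects divergence of the resulting $L^2$-norm series --- neither of which you do, and neither of which is true in the form you describe, since the low-order equations already force the would-be leading coefficients $C_2^{03}$, $C_2^{01}$, $C_3^{05}$, etc.\ to vanish before any tail analysis begins. If you replace your final step by the lowest-degree-first cascade just described (applied separately to the four decoupled series coming from (A) and (B)), your argument closes and agrees with the paper's.
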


\begin{proof}
Since, we have shown in Theorem \ref{th2} that $Q_0\equiv
Q_1\equiv0,$ we only need to prove $Q_k\equiv0$ for $k=2,3.$ For
$k=2,$ by conditions (A), we get a pair of equations
\[x\left(C_1^{10}\varphi_1^1(x)+C_2^{01}\varphi_2^1(x)\right)+
\sum_{m=2}^{\infty}C_2^{0,2m-1}x^{2m-1}\varphi_2^{2m-1}(x)=0, \]
\[x\left(C_1^{10}\varphi_1^1(x)-C_2^{01}\varphi_2^1(x)\right)+
\sum_{m=2}^{\infty}(-1)^mC_2^{0,2m-1}x^{2m-1}\varphi_2^{2m-1}(x)=0.\]
On adding these two equations, we have
\[xC_1^{10}\varphi_1^1(x)+\sum_{m=2}^{\infty}C_2^{0,4m-5}x^{4m-5}\varphi_2^{4m-5}(x)=0, \]
By equating the coefficients of $x, x^3, x^7,\ldots$ to zero, we get
$C_1^{10}=0$ and $C_2^{0,4m-5}=0,$ for $m=2,3,\ldots.$
Similarly by subtracting and then equating the coefficients of
$x,x^5, x^9,\ldots$ to zero, we obtain $C_2^{01}=0$ and $C_2^{0,4m-7}=0,$
for $m=3,4,\ldots.$ Hence $U_2\equiv0.$
By conditions (B), we have
\[x^2\left(C_0^{20}\varphi_0^2+C_2^{02}\varphi_2^2\right)+\sum_{m=2}^{\infty}C_2^{0,2m}x^{2m}\varphi_2^{2m}(x)=0, \]
\[-x^2\left(C_0^{20}\varphi_0^2+C_2^{02}\varphi_2^2\right)+\sum_{m=2}^{\infty}(-1)^mC_2^{0,2m}x^{2m}\varphi_2^{2m}(x)=0.\]
In a quite similar way, we find $V_2\equiv0$ and hence $Q_2\equiv0.$
Here, by adding and subtracting, we get the coefficients to be more disjoint,
which is the only difficulty we need to resolve. The method used in this
case will be repeated for $k\geq3.$

For $k=3,$ by conditions (A), we have
\[x\left(C_2^{10}\varphi_2^1(x)+C_3^{01}\varphi_3^1(x)\right)+\sum_{m=3}^{\infty}C_3^{0,2m-1}x^{2m-1}\varphi_3^{2m-1}(x)=0,\]
\[x\left(C_2^{10}\varphi_2^1(x)-C_3^{01}\varphi_3^1(x)\right)+\sum_{m=3}^{\infty}(-1)^mC_3^{0,2m-1}x^{2m-1}\varphi_3^{2m-1}(x)=0.\]
As very similar to above, the pair of equations implies that $U_3\equiv0.$
By conditions (B), we have
\[x^2\left(C_1^{20}\varphi_1^2(x)+C_3^{02}\varphi_3^2(x)\right)+\sum_{m=2}^{\infty}C_3^{0,2m}x^{2m}\varphi_3^{2m}(x)=0,\]
\[-x^2\left(C_1^{20}\varphi_1^2(x)+C_3^{02}\varphi_3^2(x)\right)+\sum_{m=2}^{\infty}(-1)^mC_3^{0,2m}x^{2m}\varphi_3^{2m}(x)=0.\]
This shows that $V_3\equiv0$  and hence $Q_3\equiv0.$
\end{proof}
Next, we prove the following lemma for the case $k\geq4,$ which
completes the proof of Theorem \ref{th4}.

\begin{lemma}\label{lemma10}
Let $f\in L^2(\mathbb C)$ and $~k\geq4.$ Suppose $Q_k(x)=Q_k(ix)=0,~\forall x\in\mathbb R.$
Then $Q_k\equiv0.$
\end{lemma}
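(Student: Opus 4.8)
The plan is to run the symmetrize-then-cancel scheme of Lemma~\ref{lemma9} for an arbitrary $k\geq4$; the only genuinely new feature is that the finite holomorphic block $\sum_{p=1}^{k}C_{k-p}^{p0}z^{p}\varphi_{k-p}^{p}$ now contains several terms rather than one, so the cancellations have to be organised by degree with more care. As before I would begin from the splitting $Q_{k}=U_{k}+V_{k}$ of~(\ref{exp6}) into its odd and even parts and invoke conditions (A) and (B), handling $U_{k}$ and $V_{k}$ separately. Throughout I use that each $\varphi_{k}^{q}$ depends only on $|z|$, so on the real axis the Gaussian factor $e^{-\frac14 x^{2}}$ is common to every term and may be cancelled, leaving a polynomial identity in $x$.

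For the odd series I would evaluate $U_{k}(x)=0$ and $U_{k}(ix)=0$. Since for odd $p$ one has $i^{p}=-(-i)^{p}=\epsilon_{p}\,i$ with $\epsilon_{p}=(-1)^{(p-1)/2}$, the holomorphic terms (coefficients $C_{k-p}^{p0}$) and the antiholomorphic terms (coefficients $C_{k}^{0p}$) pick up opposite phases under $z\mapsto iz$. Dividing the second relation by $i$ and adding it to or subtracting it from the first therefore sorts the terms by the residue of $p$ modulo $4$: the sum retains the holomorphic terms with $p\equiv1$ together with the antiholomorphic terms with $p\equiv3\pmod4$, and the difference retains the complementary pair. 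Each resulting identity has the schematic form of a finite holomorphic sum plus an infinite antiholomorphic tail set equal to zero, and a degree count is decisive: the holomorphic term $x^{p}\varphi_{k-p}^{p}$ has degree $2k-p\leq 2k-1$, whereas the tail term $x^{p}\varphi_{k}^{p}$ has degree $p+2k$. Hence every power $x^{d}$ with $d>2k$ is fed by the tail alone, and equating these coefficients to zero produces a recursion determining each $C_{k}^{0p}$ with $p>2k$ from finitely many lower ones. Solving this recursion and feeding its solution into the norm identity~(\ref{exp27}) I would, exactly as in Theorem~\ref{th2}, apply Raabe's test to conclude that a nonzero free coefficient forces $\|h\|_{L^{2}(\mathbb C)}^{2}=\infty$; hence the entire tail vanishes. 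The residual relations in degrees $\leq 2k-1$ then form a triangular system in the $C_{k-p}^{p0}$ whose pivots are the nonzero leading coefficients of the Laguerre polynomials $L_{k-p}^{p}$, and solving it from the top degree downward gives $U_{k}\equiv0$.

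The even series is treated in the same spirit, with one structural difference. For even $p$ the factor $i^{p}=(-1)^{p/2}$ is shared by the holomorphic and antiholomorphic terms, so condition (B) separates $p\equiv0$ from $p\equiv2\pmod4$ but does \emph{not} decouple the $C_{k-p}^{p0}$ from the $C_{k}^{0p}$. Each even identity therefore still mixes a finite holomorphic block with the infinite tail inside a single residue class. I would again equate coefficients from the highest degree downward: degrees exceeding $2k$ are controlled by the tail, yielding recursions that the Raabe argument annihilates, after which the surviving finite block of low degrees is triangular in the $C_{k-p}^{p0}$ and is solved downward to give $V_{k}\equiv0$. Combining the two gives $Q_{k}\equiv0$.

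I expect the main obstacle to be the recursion produced in the high-degree step. Because $L_{k}^{p}$ has degree $k$, the coefficient of $x^{d}$ couples up to $k+1$ consecutive tail coefficients, so the recursion has order comparable to $k$ rather than the two-term form encountered for $k=1$ in Theorem~\ref{th2}. The analytic heart is therefore to show that its $L^{2}$-summable solutions are trivial, i.e.\ to check that the explicit solution makes the ratio $m\bigl(b_{m}/b_{m+1}-1\bigr)$ tend to a limit strictly below $1$ so that Raabe's test still certifies divergence of the weighted norm. A secondary, purely bookkeeping, difficulty is the low-degree overlap region, where the single-degree contribution of each finite coefficient $C_{k-p}^{p0}$ sits alongside the lowest tail coefficients; one must verify that, once the tail is known to vanish, the remaining system is genuinely triangular so that all the $C_{k-p}^{p0}$ are forced to zero.
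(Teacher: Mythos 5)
Your symmetrization step is exactly right and matches the paper: conditions (A) and (B), the observation that odd $p$ gives opposite phases to the holomorphic and antiholomorphic terms under $z\mapsto iz$ while even $p$ gives the same phase, and the resulting sorting of indices by the residue of $p$ modulo $4$. Where the proposal breaks down is the direction in which you then extract the coefficients. Since the antiholomorphic series is infinite there is no top degree to start from, so your ``highest degree downward'' scheme degenerates into a linear recursion of order comparable to $k$ for the $C_k^{0p}$, together with the claim that every $L^2$-summable solution of that recursion is trivial. You flag this yourself as ``the analytic heart'' and do not supply it; that is a genuine gap, and not a small one. For $k\geq4$ the coefficient of $x^d$ couples roughly $k$ consecutive unknowns, so the closed-form solution and the Raabe computation that worked for the two-term recursion in Theorem \ref{th2} are simply unavailable, and controlling all solutions would require something like a Poincar\'e--Perron asymptotic analysis. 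The paper never goes near this. Its proof reads the two symmetrized identities from the \emph{lowest} degree upward: in each of the two odd sub-series the holomorphic terms carry $p$ in one residue class mod $4$ and the antiholomorphic terms carry $p$ in the other, so the coefficient of $x^d$ (for $d=1,3,5,\dots$ taken in increasing order) contains exactly one not-yet-eliminated unknown, with nonzero pivot $L_{k-d}^{d}(0)$ or $L_{k}^{d}(0)$. Every coefficient dies one at a time, with no recursion and no summability argument.

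The same issue infects your treatment of $V_k$. There the bracket $\bigl(C_{k-p}^{p0}\varphi_{k-p}^{p}+C_{k}^{0p}\varphi_{k}^{p}\bigr)$ genuinely couples two unknowns at the same lowest degree $p$, and your claim that, once the tail vanishes, the surviving low-degree block is ``triangular in the $C_{k-p}^{p0}$'' presupposes that the Raabe step has already annihilated \emph{all} antiholomorphic coefficients --- which, even granting the unproved asymptotics, it cannot do for the finitely many free ones with $p\leq2k$. The paper resolves each bracket locally and from below: because the next bracket in the same residue class enters only four degrees higher, the coefficients of $x^{p}$ and $x^{p+2}$ involve only the two unknowns of the current bracket, and these vanish because the $2\times2$ matrix with rows $\bigl(L_{k-p}^{p}(0),\,L_{k}^{p}(0)\bigr)$ and $\bigl((L_{k-p}^{p})'(0),\,(L_{k}^{p})'(0)\bigr)$ is nonsingular; see equations (\ref{exp16})--(\ref{exp18}). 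If you reorganize your cancellation bottom-up and add this nonsingularity check, your argument becomes the paper's; as written, the central analytic step is missing.
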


\begin{proof}
First, we show that the odd series $U_k\equiv0, \forall k\geq4.$ By conditions (A),
we can write
\[\sum_{p\in G_k}x^p\left(C_{k-p}^{p0}\varphi_{k-p}^{p}(x)+C_{k}^{0p}\varphi_{k}^{p}(x)\right)+
\sum_{p\in H_k} C_{k}^{0p}x^p\varphi_{k}^{p}(x)=0\]
\[\sum_{p\in G_k}(-1)^{\frac{p-1}{2}}x^p\left(C_{k-p}^{p0}\varphi_{k-p}^{p}(x)-C_{k}^{0p}\varphi_{k}^{p}(x)\right)+
\sum_{p\in H_k}(-1)^{\frac{p+1}{2}}C_{k}^{0p}x^p\varphi_{k}^{p}(x)=0.\]
By adding and subtracting, we get two series in which there are no common coefficients.
On equating  the coefficients of $x, x^3, x^5,\ldots,$ in both the new series to zero, we get all
the coefficients of odd series $U_k$ are zero. Hence $U_k\equiv0, \forall k\geq4.$ Since $i^4=1,$
it shows that there must occur some change in the pattern of the even series $V_k$ for $k\geq4.$
For instance consider $V_4.$
By condition (B), we have
\[x^2\left(C_2^{20}\varphi_2^2(x)+C_4^{02}\varphi_4^2(x)\right)+x^4\left(C_0^{40}\varphi_0^4(x)+C_4^{04}\varphi_4^4(x)\right)+\]
\[\sum_{m=3}^{\infty}C_4^{0,2m}x^{2m}\varphi_4^{2m}(x)=0,\]
\[-x^2\left(C_2^{20}\varphi_2^2(x)+C_4^{02}\varphi_4^2(x)\right)+x^4\left(C_0^{40}\varphi_0^4(x)+C_4^{04}\varphi_4^4(x)\right)+\]
\[\sum_{m=3}^{\infty}(-1)^mC_4^{0,2m}x^{2m}\varphi_4^{2m}(x)=0.\]
On adding and subtracting, we get the two series
\begin{equation}\label{exp14}
 x^4\left(C_0^{40}\varphi_0^4(x)+C_4^{04}\varphi_4^4(x)\right)+\sum_{m=4}^{\infty}C_4^{0,4(m-2)}x^{2m}\varphi_4^{4(m-2)}(x)=0,
\end{equation}
\begin{equation}\label{exp15}
x^2\left(C_2^{20}\varphi_2^2(x)+C_4^{02}\varphi_4^2(x)\right)+\sum_{m=3}^{\infty}C_4^{0,4m-6}x^{4m-6}\varphi_4^{4m-6}(x)=0.
\end{equation}
By equating the coefficients of $x^4, x^6, x^8,\ldots,$ in Equation (\ref{exp14}) to zero,
we get, all the coefficients in (\ref{exp14}) are zero.
By canceling $e^{-\frac{1}{4}x^2}$ in series (\ref{exp15}) and using $x^2\rightarrow 2x,$ we have
\[2x\left(C_2^{20}L_2^2(x)+C_4^{02}L_4^2(x)\right)+\sum_{m=3}^{\infty}C_4^{0,4m-6}(2x)^{2m-3}L_4^{4m-6}(x)=0.\]
On equating the coefficients of $x$ and $x^2$ to zero, we get
\begin{equation} \label{exp16}
 \left(
  \begin{array}{cccc}
  L_{2}^2(0)  &  L_4^2(0) \\
  (L_{2}^2)^{'}(0)  &  (L_4^2)^{'}(0) \\
  \end{array}
\right)\left(
\begin{array}{c}
           C_{2}^{20}\\
           C_4^{02} \\
\end{array}
\right)=
\left(
  \begin{array}{cccc}
 6  &    6 \\
-4  &   -20 \\
  \end{array}
\right)\left(
\begin{array}{c}
           C_{2}^{20}\\
           C_4^{02} \\
\end{array}
\right)=0.
\end{equation}
Thus $C_{2}^{20}=C_4^{02}=0$ and hence we find $V_4\equiv0.$ Equivalently,
we can use onwards to write: on equating the coefficients of $x^2$ and $x^4$
in Equation (\ref{exp15}) to zero, we get Equations (\ref{exp16}).
Now, it only remains to show that $V_k\equiv0, \forall k\geq5.$
In this case, by conditions (B), we have
\[\sum_{p\in E_k}x^p\left(C_{k-p}^{p0}\varphi_{k-p}^{p}+C_{k}^{0p}\varphi_{k}^{p}\right)+\sum_{p\in F_k} C_{k}^{0p}x^p\varphi_{k}^{p}=0,\]
\[\sum_{p\in E_k}(-1)^{\frac{p}{2}}x^p\left(C_{k-p}^{p0}\varphi_{k-p}^{p}+C_{k}^{0p}\varphi_{k}^{p}\right)
+\sum_{p\in F_k}(-1)^{\frac{p}{2}}C_{k}^{0p}x^p\varphi_{k}^{p}=0.\]
We further require a partition of the set $\mathbb E=A_1\cup A_2,$ where $A_1=\{4t-2: t\in\mathbb N\}$ and $A_2=\{4t: t\in \mathbb N\}.$
By adding and subtracting, we will get the following pair of series having brackets.
\begin{equation}\label{exp17}
\sum_{p\in{ E_k\cap A_2}}x^p\left(C_{k-p}^{p0}\varphi_{k-p}^{p}+C_{k}^{0p}\varphi_{k}^{p}\right)+\sum_{p\in {F_k\cap A_2}} C_{k}^{0p}x^p\varphi_{k}^{p}=0,
 \end{equation}
\begin{equation}\label{exp18}
\sum_{p\in{ E_k\cap A_1}}x^p\left(C_{k-p}^{p0}\varphi_{k-p}^{p}+C_{k}^{0p}\varphi_{k}^{p}\right)+\sum_{p\in {F_k\cap A_1}} C_{k}^{0p}x^p\varphi_{k}^{p}=0,
 \end{equation}
Since the matrix
\[\left(
  \begin{array}{cccc}
  L_{k-p}^p(0)  &  L_k^p(0) \\
  (L_{k-p}^p)^{'}(0)  &  (L_k^p)^{'}(0) \\
  \end{array}
\right)\]
is non-singular and $p\in A_2,$ the brackets in (\ref{exp17}) is not a problem.
Hence on equating the coefficients of $x^2, x^4, x^6,\ldots$ to zero in (\ref{exp17}),
it follows that  all the coefficients in (\ref{exp17}) are zero. Similarly, in (\ref{exp18}),
as $p$ in $A_1,$  it also follows that all the coefficients in (\ref{exp18}) are zero.
Thus we find $V_k\equiv0, \forall k\geq5$ and hence $Q_k\equiv0,\forall k\geq0.$
This completes the proof.
\end{proof}

\begin{remark} \label{rk1}
$(a).$ We can also prove the general case by calculating case-wise, when $k=2m, 2m+1$
and $p=4t-2, 4t,$ but it would only make the calculation to be more complicated.
In another attempt, to get a more transparent proof of Theorem \ref{th4}, keep applying
the right invariant operator
$\tilde A=\frac{\partial}{\partial z}+\frac{1}{4}\bar z$ to $Q_k(z).$ Then a straightforward
calculation shows that
\[\tilde A^pQ_k(0)=p!\varphi_{k-p}^p(0)C_{k-p}^{p0},\]
if ~$p\leq k$ and ~$0$ otherwise.
By the condition
$Q_k(x)=Q_k(ix)=0, \forall x\in\mathbb R,$ it follows that $\tilde A Q_k(0)=0.$ This implies
$C_{k-1}^{10}=0$ and in turn $C_k^{01}=0.$ In view of this, we can immediately conclude that
$Q_1\equiv0.$ But for $k\geq2,$ the conditions on $Q_k$ do not imply $\tilde A^pQ_k(0)=0,$
when $p\geq2,$ otherwise this would leads to a more transparent proof
of Theorem \ref{th4}.

\smallskip

$(b).$  In Theorem \ref{th4}, we have shown that any two lines having angle $\pi/2$ is a
set of injectivity for the TSM on $\mathbb C.$ However, the question that  any two lines
having positive angle less than $\pi/2$ can be a set of injectivity for the TSM on
$\mathbb C$ is still unanswered.

\smallskip

$(c).$ Consider Coxeter system of $N$-lines
$\Sigma_N=\cup_{l=0}^{N-1}\{te^{i\theta_l}: \theta_l=\frac{\pi l}{N}, t\in\mathbb R\}.$
Suppose $\theta_l=\pi/2.$ Then $l=N/2.$ By Theorem \ref{th4}, it follows that any
Coxeter system of even number of lines is also a set of injectivity for the TSM on $L^q(\mathbb C).$
\end{remark}
Next, we set to describe the problem for any Coxeter system of odd lines.
Let $1,\omega, \omega^2,\ldots,\omega^{N-1}$ be the $N$ roots of unity. Then
$\Sigma_N=\cup_{l=0}^{N-1}\{\omega^lx: x\in\mathbb R\}.$ We have formulated
this problem in the following way.

\bigskip

\noindent{\bf Conjecture.}
Let $f\in L^2(\mathbb C).$ Suppose $f\times\mu_r(\omega^lx)=0, \forall r>0,$ and $l=0,1,\ldots,N-1$
and $\forall~x\in\mathbb R.$ Then $f=0$ a.e.

\bigskip

We would like to produce some evidence about the feasibility of this problem.
In this case, we also get a system of solvable recursion relations by decomposing
the series using symmetries, however those recursion relations for higher values of $k,$
gives rise to a higher order square matrix, which is needed to show non-singular.
This is the only difficulty in getting a solution to this problem.
Since $Q_k(\omega^lx)=0, \forall l;~l=0,1,\ldots,N-1$ and $\forall x\in\mathbb R,$
as similar to Theorem \ref{th4}, we will have the following conditions.

\begin{description}
 \item[(A)] $U_k(\omega^lx)=0, \forall ~l;~l=0,1,\ldots,N-1 \text{ and }$
 \item[(B)] $V_k((\omega^lx)^2)=0, \forall ~l;~l=0,1,\ldots,N-1.$
 \end{description}
For $N=3,$ by a simple argument that find all odd positive integers which are not divisible by $3$,
we can find a partition of the set of natural numbers as $\mathbb N=\cup_{i=0}^2A_i,$ where
$A_o=\{1,2,3,4\}, ~A_1=\{6t-1, 6t: t\in\mathbb N\}$ and $A_2=\{6t+1,6t+2,6t+3,6t+4: t\in\mathbb N\}.$

For $k\in A_o,$ the conditions (A) together with the facts $1+\omega+\omega^2=0$ and $\omega^3=1,$
implies that
\[C_k^{03}x^3\varphi_k^3+C_k^{09}x^9\varphi_k^9+\cdots=0.\]
On equating the coefficient of $x^3,x^9,\ldots$ to zero, we get $C_k^{03}=C_k^{09}=\cdots=0.$
Hence the odd series $U_k$ reduces to
\begin{equation} \label{exp9}
x\left(C_{k-1}^{10}\varphi_{k-1}^1+C_k^{01}\varphi_k^1\right)+C_k^{05}x^5\varphi_k^5+C_k^{07}x^7\varphi_k^7+\cdots=0.
\end{equation}
On equating the coefficients of $x$ and $x^3$ in Equation (\ref{exp9}) to zero, we get
\begin{equation} \label{exp10}
 \left(
  \begin{array}{cccc}
  L_{k-1}^1(0)  &  L_k^1(0) \\
  (L_{k-1}^1)^{'}(0)  &  (L_k^1)^{'}(0) \\
  \end{array}
\right)\left(
\begin{array}{c}
           C_{k-1}^{10}\\
           C_k^{01} \\
\end{array}
\right)=
\left(
  \begin{array}{cccc}
 k &    k+1 \\
\frac{-k(k-1)}{2}  &   \frac{-k}{2} \\
  \end{array}
\right)\left(
\begin{array}{c}
           C_{k-1}^{10}\\
           C_k^{01} \\
\end{array}
\right)=0.
\end{equation}
Thus $C_{k-1}^{10}=C_k^{01}=0$ and hence we find  $U_k\equiv0,\forall k\in A_o.$
For $k\in\{1,2,3\},$ by condition (B) and the facts $1+\omega+\omega^2=0$ and $w^3=1,$ we have
\[C_k^{06}x^6\varphi_k^6+C_k^{0,12}x^{12}\varphi_k^{12}+\cdots=0.\]
This shows that $C_2^{06}=C_2^{0,12}=\cdots=0,$ and in turn the even series $V_k$ reduces to
\[x^2\left(C_{k-2}^{20}\varphi_{k-2}^k+C_k^{02}\varphi_k^2\right)+C_k^{04}x^4\varphi_k^4+C_k^{08}x^8\varphi_k^8+\cdots=0.\]
Let $F_k^n=L_k^n(0).$ On equating the coefficients of $x^2, x^4$ and $x^6$
to zero, we get
\[\left(
  \begin{array}{cccc}
 F_{k-2}^2  &   F_k^2       &     0 \\
 F_{0}^2    &  -F_{k-1}^3   &  F_k^4\\
   0        &   F_{k-2}^4   & -F_{k-1}^5\\
\end{array}
\right)\left(
\begin{array}{c}
           C_{k-2}^{20}\\
           C_k^{02} \\
           C_k^{04}\\
\end{array}
\right)=0.\]
This implies $C_{k-2}^{20}=C_k^{02}=C_k^{04}=0.$ Thus, it follows that $V_k\equiv0$ and
hence $Q_k\equiv0,$ for $k\in\{0,1,2,3\}.$ This gives a strong evidence about the existence
of the Problem. We would also like to focus on to the proof, for higher values of $k.$
Let $k\in A_1\cup A_2.$ Then by the conditions (A) together with the facts
$1+\omega+\omega^2=0$ and $\omega^3=1,$ the odd Series $U_k$ reduces to
\begin{eqnarray*}\label{exp11}
 &&x\left(C_{k-1}^{10}\varphi_{k-1}^1+C_k^{01}\varphi_k^1\right)+ x^5\left(C_{k-5}^{50}\varphi_{k-5}^5+C_k^{05}\varphi_k^5\right)+
x^7\left(C_{k-7}^{70}\varphi_{k-7}^7+C_k^{07}\varphi_k^7\right)\nonumber\\
 && +\cdots+ x^r\left(C_{k-r}^{r0}\varphi_{k-r}^r+C_k^{0r}\varphi_k^r\right)+\cdots+ C_k^{0j}x^j\varphi_k^j+\cdots=0.
\end{eqnarray*}
On equating the coefficients of $x$ and $x^3$ to zero, we get $C_{k-1}^{10}=C_k^{01}=0.$
Thus
\begin{eqnarray}\label{exp12}
&& x^5\left(C_{k-5}^{50}\varphi_{k-5}^5+C_k^{05}\varphi_k^5\right)+x^7\left(C_{k-7}^{70}\varphi_{k-7}^7+C_k^{07}\varphi_k^7\right)+\cdots \nonumber\\
&& +x^r\left(C_{k-r}^{r0}\varphi_{k-r}^r+C_k^{0r}\varphi_k^r\right) +\cdots+ C_k^{0j}x^j\varphi_k^j+\cdots=0.
\end{eqnarray}
The main problem here is to remove the brackets (mixed term) in this series. Then, it is easy to show that
rest of coefficients are zero. To remove the brackets, we need to identify the indices  $r, j,$ appeared in
(\ref{exp12}) and the number of equations $m$ required. By  Equation (\ref{exp12}),
 we can write the following table.

\bigskip

\begin{center}
\begin{tabular}{|l|l|l|l|l|}
\hline
$k$   &  $r$   & $j$ & $m=\frac{j+1}{2}-2$ & $k-r$ \\
\hline
$k\in A_1$ & $6t-1$ & $2k-1$ & $k-2$ & $0,1$  \\
\hline
$k\in A_2$ & $6t+1$ & $2k+1$ & $k-1$ & $0,1,2,3$  \\
\hline
\end{tabular}
\end{center}
\bigskip

Let $k\in A_1.$  Equate the coefficients of $x^5, x^7, \ldots, x^j$ to zero.
In order to show that coefficients in the brackets are zero, we need to show
that the following matrices are non-singular. For $k=5,$ we have the matrix
\begin{equation*} \label{exp13}
 \left(
  \begin{array}{ccccccccccc}
   F_{0}^5   &  F_5^5        &  0      \\
    0        & -F_{4}^6    &  F_{5}^7  \\
    0        &  F_{3}^7    & -F_{4}^8  \\
\end{array}
\right),
\end{equation*}
which is non-singular. For higher values of $k,$ we get the higher order matrices
which should be non-singular. This is the only difficulty in the above arguments.
Hence, we leave this problem open for future research.

\begin{remark} \label{rk7}
In the case, when $N=1$ (without exponential decay),  we can not obtain the
$m\times m$ matrices. However for $N\geq3,$ we have the $m\times m$  matrices
which is needed to be non-singular.
\end{remark}

From Remark \ref{rk1}(b), it is clear that the set $\Sigma_{2N}$
is a set of injectivity for the TSM for $L^q(\mathbb C),$ with
$1\leq q\leq2.$ As a dual problem, it is natural to ask that
$\Sigma_{2N}$ is a set of density for $L^p(\mathbb C),$ for $2\leq
 p<\infty.$ The following result would emerge as the first result
about the sets of density in terms of the TSM. Let
$C_c^\sharp(\mathbb C)$ denote the space of radial compactly
supported continuous functions on $\mathbb C.$ Let
$\tau_zf(w)=f(z-w)e^{\frac{i}{2}\text{Im}(z.\bar w)}.$

\begin{proposition}\label{prop3}
The subspace $\mathscr F(\Sigma_{2N})
={\emph{Span}}\left\{\tau_zf: z\in\Sigma_{2N}, f\in C_c^\sharp(\mathbb C) \right\}$
is dense in $L^p(\mathbb C),$ for $2\leq p<\infty.$
\end{proposition}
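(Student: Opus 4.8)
The plan is to argue by Hahn--Banach duality, exactly as the word ``dual'' in the setup suggests. A subspace of $L^p(\mathbb C)$ is dense if and only if its annihilator in the dual space is trivial. Since $2\leq p<\infty$, the dual is $L^{p'}(\mathbb C)$ with $p'=p/(p-1)\in(1,2]$, which I pair with $L^p$ bilinearly via $\langle u,v\rangle=\int_{\mathbb C}u(w)v(w)\,dw$. So it suffices to take $g\in L^{p'}(\mathbb C)$ with $\langle\tau_zf,g\rangle=0$ for every $z\in\Sigma_{2N}$ and every radial $f\in C_c^\sharp(\mathbb C)$, and to deduce $g=0$ a.e. I emphasize at the outset that $p'$ lands in $[1,2]$, the range in which the injectivity Theorem \ref{th4} holds; this is precisely why the statement is restricted to $p\geq2$.

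The next step is to identify the functional $\langle\tau_zf,\cdot\rangle$ with a twisted spherical mean. Writing $\tau_zf(w)=f(z-w)e^{\frac{i}{2}\mathrm{Im}(z\bar w)}$, substituting $w\mapsto z-w$, and using $\mathrm{Im}(z\overline{(z-w)})=-\mathrm{Im}(z\bar w)$, I get
\[
\langle\tau_zf,g\rangle=\int_{\mathbb C}g(z-w)f(w)e^{-\frac{i}{2}\mathrm{Im}(z\bar w)}\,dw=(g\times_{-1}f)(z),
\]
the twisted convolution carrying the opposite phase. Because $f$ is radial with profile $F$ (that is, $f(w)=F(|w|)$), polar coordinates and the definition of the normalized measure $\mu_r$ give $(g\times_{-1}f)(z)=2\pi\int_0^\infty F(r)\,r\,(g\times_{-1}\mu_r)(z)\,dr$. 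Hence the annihilation hypothesis becomes $\int_0^\infty F(r)\,r\,(g\times_{-1}\mu_r)(z)\,dr=0$ for every continuous compactly supported $F$ and every $z\in\Sigma_{2N}$, which should pin down the twisted spherical means of $g$ along $\Sigma_{2N}$.

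The hard part will be to make ``$(g\times_{-1}\mu_r)(z)=0$'' a meaningful and \emph{pointwise-everywhere} statement: for a mere $L^{p'}$ function the spherical mean at a fixed $(r,z)$ need not be defined, and the variational argument above only yields vanishing for almost every $r$. I would dispose of this by regularizing. Replace $g$ by $g_\varepsilon:=g\times_{-1}\psi_\varepsilon$, where $\psi_\varepsilon$ is a radial smooth compactly supported approximate identity. By associativity of the twisted convolution and the fact that a twisted convolution of two radial functions is again radial, $(g_\varepsilon\times_{-1}f)(z)=g\times_{-1}(\psi_\varepsilon\times_{-1}f)(z)=\langle\tau_z(\psi_\varepsilon\times_{-1}f),g\rangle=0$, so $g_\varepsilon$ inherits the annihilation hypothesis; moreover $g_\varepsilon$ is smooth and lies in $L^{p'}\cap L^\infty\subseteq L^2$ since $p'\leq2$. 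For this smooth function $r\mapsto(g_\varepsilon\times_{-1}\mu_r)(z)$ is continuous, so the integral identity forces $(g_\varepsilon\times_{-1}\mu_r)(z)=0$ for \emph{all} $r>0$ and all $z\in\Sigma_{2N}$.

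Finally I would match signs and invoke injectivity. Conjugating gives $\overline{(g_\varepsilon\times_{-1}\mu_r)(z)}=(\overline{g_\varepsilon}\times\mu_r)(z)$, so $\overline{g_\varepsilon}\in L^2(\mathbb C)$ has vanishing ordinary twisted spherical means at every $z\in\Sigma_{2N}$ and every $r>0$. Since $\Sigma_{2N}$ is a set of injectivity for the TSM on $L^q(\mathbb C)$, $1\leq q\leq 2$, by Theorem \ref{th4} and Remark \ref{rk1}(c), it follows that $\overline{g_\varepsilon}=0$, hence $g_\varepsilon=0$ for every $\varepsilon$. Letting $\varepsilon\to0$ and using $g_\varepsilon\to g$ in $L^{p'}(\mathbb C)$ yields $g=0$, which closes the duality argument and proves the density of $\mathscr F(\Sigma_{2N})$.
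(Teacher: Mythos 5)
Your proposal is correct and follows essentially the same route as the paper: Hahn--Banach duality, identification of the annihilation condition with a twisted convolution against radial $f$, polar decomposition to extract the twisted spherical means of $g$ on $\Sigma_{2N}$, and an appeal to Theorem \ref{th4}. Your extra mollification step (replacing $g$ by $g\times_{-1}\psi_\varepsilon$ to make $r\mapsto (g_\varepsilon\times_{-1}\mu_r)(z)$ continuous before concluding pointwise vanishing) is a careful justification of what the paper does more informally by ``differentiating'' the integral identity, but it is not a different argument.
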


\begin{proof}
Let $\frac{1}{p}+\frac{1}{q}=1.$ Then $1\leq q\leq2.$ By Hahn-Banach theorem, it is
enough to show that $\mathscr F(\Sigma_{2N})^\bot=\{0\}.$ Let $g\in L^q(\mathbb C)$
be such that
\[\int_{\mathbb C}\tau_zf(w)g(w)dw=0, z\in\Sigma_{2N}, ~\forall f\in C_c^\sharp(\mathbb C).\]
That is, \[\overline{\bar g\times\bar f}(z)= f\times g(z)=0.\]
Let the support of $f$ be contained in $[0,t].$ Then by passing to the polar decomposition, we get
\[\int_{r=0}^t\bar g\times\mu_r(z)\bar f(r)r^{2n-1}dr=0.\]
By differentiating the above equation, it follows that $\bar g\times\mu_t(z)=0, \forall t>0$
and $\forall z\in\Sigma_{2N}.$ Thus by Theorem \ref{th4}, we conclude that $g=0$ a.e. on $\mathbb C^n.$
\end{proof}

\section{Discussion on sets of injectivity in higher dimension}\label{section4}
More generally,  similar to the work of Agranovsky and Quinto \cite{AQ}, let
$f\in L^1_{\loc}(\mathbb C^n)$ and write $S(f)=\{z\in\mathbb C^n: f\times\mu_r(z)=0, \forall~r>0\}.$
Our main problem is to describe the complete geometrical structure of $S(f)$ that would ensure which
``sets" are sets of injectivity for the TSM. There is one such result.
\begin{lemma} \label{lemma7}
 Let $f\in L^p(\mathbb C^n)\cap C(\mathbb C^n),$ for $ 1\leq p\leq\infty.$
 Then \[S(f)=\bigcap_{k=0}^\infty Q_k^{-1}(0).\]
\end{lemma}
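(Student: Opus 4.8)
The plan is to prove the two inclusions $S(f)\subseteq\bigcap_{k}Q_k^{-1}(0)$ and $\bigcap_{k}Q_k^{-1}(0)\subseteq S(f)$ separately, passing in each direction between the twisted spherical means $f\times\mu_r$ and the spectral projections $Q_k=f\times\varphi_k^{n-1}$ through the Laguerre calculus. The single structural fact that drives both inclusions is the relation between twisted convolution by a \emph{radial} object and the family $\{\mu_r\}_{r>0}$.

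First I would establish the polar decomposition identity. Since $\varphi_k^{n-1}$ is radial, writing $w=r\omega$ with $\omega\in S^{2n-1}$ in the defining integral of the twisted convolution and recognizing the inner spherical integral as a multiple of $f\times\mu_r(z)$ gives
\[
Q_k(z)=f\times\varphi_k^{n-1}(z)=|S^{2n-1}|\int_0^\infty f\times\mu_r(z)\,\varphi_k^{n-1}(r)\,r^{2n-1}\,dr.
\]
This makes the inclusion $S(f)\subseteq\bigcap_{k}Q_k^{-1}(0)$ immediate: if $z\in S(f)$ then $f\times\mu_r(z)=0$ for every $r>0$, so the integrand vanishes and $Q_k(z)=0$ for all $k$.

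For the reverse inclusion I would use the eigenvalue relation $\varphi_k^{n-1}\times\mu_r=\lambda_k(r)\,\varphi_k^{n-1}$, valid because a radial measure acts as a scalar on each Laguerre space; evaluating at the origin and using $\varphi_k^{n-1}\times\mu_r(0)=\varphi_k^{n-1}(r)$ identifies $\lambda_k(r)=\varphi_k^{n-1}(r)/L_k^{n-1}(0)$. Combining this with the special Hermite expansion $f=(2\pi)^{-n}\sum_k Q_k$ and the associativity $(f\times\varphi_k^{n-1})\times\mu_r=f\times(\varphi_k^{n-1}\times\mu_r)$ yields
\[
f\times\mu_r(z)=(2\pi)^{-n}\sum_{k=0}^\infty\lambda_k(r)\,Q_k(z).
\]
Hence if $z\in\bigcap_{k}Q_k^{-1}(0)$, every term vanishes and $f\times\mu_r(z)=0$ for all $r>0$, that is, $z\in S(f)$.

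The main obstacle is convergence, on three fronts: the special Hermite expansion of $f$ converges a priori only in $L^2$, the interchange of twisted convolution by $\mu_r$ with the infinite sum must be justified, and the resulting Laguerre series must converge pointwise at the fixed $z$. I would handle these by first reducing to $f\in L^2(\mathbb C^n)\cap C(\mathbb C^n)$ via convolution with a radial compactly supported smooth approximate identity, exactly as in the reduction preceding Theorem \ref{th4}, which preserves both $S(f)$ and the vanishing of each $Q_k$. Then the orthogonality relations $\varphi_j^{n-1}\times\varphi_k^{n-1}=(2\pi)^n\delta_{jk}\varphi_k^{n-1}$ and the decay of $\varphi_k^{n-1}(r)\,r^{2n-1}$ control the series termwise, while continuity of $f$ together with the real analyticity of each $Q_k$ (noted at the start of Section \ref{section3}) legitimizes evaluation at a single point and closes the argument.
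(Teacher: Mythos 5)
Your forward inclusion coincides with the paper's: the polar decomposition of twisted convolution with the radial function $\varphi_k^{n-1}$ gives $Q_k(z)=c_n\int_0^\infty f\times\mu_r(z)\,\varphi_k^{n-1}(r)\,r^{2n-1}\,dr$, so $z\in S(f)$ forces $Q_k(z)=0$ for all $k$. The reverse inclusion is where you take a different route, and it contains a genuine gap. You want to evaluate the identity $f\times\mu_r=(2\pi)^{-n}\sum_k\lambda_k(r)\,Q_k$ at the single point $z$ where all $Q_k(z)$ vanish. That identity holds in $L^2(\mathbb C^n)$ as functions of $z$, but $L^2$ convergence of the special Hermite series gives nothing at a prescribed point; real analyticity of each $Q_k$ (and of each partial sum) does not transfer to the limit, and the termwise control you invoke actually fails: $|Q_k(z)|\le\|f\|_2\|\varphi_k^{n-1}\|_2\sim k^{(n-1)/2}$, while for fixed $r>0$ the Laguerre asymptotics give $\lambda_k(r)=\varphi_k^{n-1}(r)/L_k^{n-1}(0)\sim k^{-(n-1)/2-1/4}$, so $\sum_k|\lambda_k(r)Q_k(z)|$ is majorized only by $\sum_k k^{-1/4}$, which diverges. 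Making your argument rigorous would require a genuine summability result (e.g.\ Abel means via the special Hermite semigroup for continuous functions), which is far heavier than the lemma warrants.

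The paper's reverse direction sidesteps all of this by fixing $z$ and changing which variable is expanded: the polar-decomposition identity you already proved says that the one-variable function $g(r)=f\times\mu_r(z)$ is orthogonal to every $\varphi_k^{n-1}$ in $L^2\left(\mathbb R_+, r^{2n-1}dr\right)$. Since the Laguerre functions form a complete orthonormal system in that space, $g=0$ a.e., and continuity of $r\mapsto f\times\mu_r(z)$ upgrades this to $g(r)=0$ for every $r>0$, i.e.\ $z\in S(f)$. No interchange of sum and convolution and no pointwise summability of the special Hermite expansion is needed; you should replace the second half of your argument with this completeness argument on the half-line.
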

\begin{proof}
Let $z\in S(f).$  Then by polar decomposition, it follows that $Q_k(z)=0, \forall k\geq0.$
Conversely, let $Q_k(z)=0, \forall k\geq0.$ Then
\[\int_{r=0}^\infty f\times\mu_r(z)\varphi_k^{n-1}(r)r^{2n-1}dr=0, \forall k\geq0.\]
Since the set $\{\varphi_k^{n-1}: k=0,1,2,\ldots\}$ is an orthonormal set for
$L^2\left(\mathbb R_+, r^{2n-1}dr\right)$ and $f\times\mu_r(z)$ is continuous in $r,$
it follows that $f\times\mu_r(z)=0, \forall r>0$ and hence $z\in S(f).$
\end{proof}

Next, we find out $S(f)$ for the type function
$f(z)=\tilde{a}(|z|)P(z)$ on $\mathbb C^n.$ For this, we need the
following result of Filaseta and Lam \cite{FL}, about the
irreducibility of Laguerre polynomials. Define the Laguerre
polynomials by
\[L^\alpha_k(x)=\sum_{i=0}^k(-1)^i\binom{\alpha+k}{k-i}\frac{x^i}{i!},\]
$\text{ where } k\in \mathbb Z_{+} \text{ and } \alpha\in\mathbb C.$
\begin{theorem}\label{th6} \emph{\cite{FL}} Let $\alpha$ be a rational number, which
is not a negative integer. Then for all but finitely many $k\in\mathbb Z_+$, the polynomial
$L_k^\alpha(x)$ is irreducible over the rationals.
\end{theorem}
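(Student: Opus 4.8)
The plan is to bound the degrees of the possible irreducible factors of $L_k^\alpha$ through a $p$-adic Newton polygon analysis, and then to eliminate every admissible degree by invoking the existence of primes (or integers with a large prime factor) in suitable short intervals. First I would normalize the problem: writing $\alpha=a/b$ in lowest terms and clearing denominators, I would replace $L_k^\alpha(x)$ by an integer polynomial $g_k(x)$ whose irreducibility over $\mathbb Q$ is equivalent to that of $L_k^\alpha$. Reading the coefficients off from
\[
L_k^\alpha(x)=\sum_{j=0}^k(-1)^j\frac{(\alpha+j+1)(\alpha+j+2)\cdots(\alpha+k)}{(k-j)!\,j!}\,x^j,
\]
the arithmetic of the products $(\alpha+j+1)\cdots(\alpha+k)$ governs the $p$-adic valuations of the coefficients, which is exactly the data needed to build the Newton polygons.

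Next, for a prime $p$ I would form the Newton polygon of $g_k$ with respect to $p$, namely the lower convex hull of the points $(j,v_p(c_j))$, where $c_j$ is the cleared coefficient of $x^j$ and $v_p$ the $p$-adic valuation. The classical principle, going back to Dumas and exploited by Schur and Coleman for Laguerre-type polynomials, is that the degrees of the factors of $g_k$ over $\mathbb Q_p$ are constrained by the horizontal lengths of the edges of this polygon, so any factorization over $\mathbb Q$ must be compatible with the Newton data at every $p$ simultaneously. The key is to locate a prime $p$ whose valuations produce a single long edge, or a vertex structure that forces any nontrivial factor to have degree exceeding $k/2$; such a prime is supplied by a large divisor of one of the shifted integers $\alpha+j$ appearing in the numerator products.

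To make this work for all but finitely many $k$, the main analytic input is a lower bound for the greatest prime factor of a product of consecutive terms of the arithmetic progression $\{bj+a\}$, in the spirit of the Sylvester--Erd\H{o}s theorems: for each large $k$ one needs a prime $p$ exceeding a fixed multiple of $k$ that divides exactly one of the relevant factors, thereby pinning a vertex of the Newton polygon at a controlled position. Combining the Newton-polygon constraint (no factor of degree in $[1,k/2]$) with its mirror image for the reciprocal polynomial (which rules out degrees in $[k/2,k-1]$) leaves only the trivial factorization, and hence irreducibility.

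The hardest part will be the uniform elimination of factors of intermediate degree near $k/2$: this is precisely where the analytic number theory enters, and where one must guarantee a prime of the right size in a short interval for every sufficiently large $k$. A secondary technical nuisance is the handling of the finitely many primes dividing the denominator $b$ of $\alpha$, for which the valuations behave differently and must be treated by separate Newton-polygon arguments; these bad primes, together with the finitely many small $k$ for which the prime-existence estimates are not yet effective, account for the phrase \emph{all but finitely many} in the statement.
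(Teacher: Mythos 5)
The paper does not prove this statement at all: it is quoted verbatim from Filaseta and Lam \cite{FL} and used as a black box to derive Corollary \ref{cor1}, so there is no internal proof to compare against. Your outline does correctly identify the strategy of the cited paper --- clear denominators, study the $p$-adic Newton polygons of the resulting integer polynomial via Dumas' factorization theorem, and rule out factors of each degree $\ell\le k/2$ by producing, for every large $k$, primes that divide the shifted products $(\alpha+j+1)\cdots(\alpha+k)$ to controlled valuations; the analytic input is indeed of Sylvester--Erd\H{o}s type on the greatest prime factor of products of consecutive terms of the progression $\{bj+a\}$.

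However, as written this is a plan rather than a proof, and both of its load-bearing steps are asserted rather than established. First, you never state or prove the precise Newton polygon lemma you need: Dumas' theorem only says that the edges of the polygon of a product are a translation-rearrangement of the edges of the factors, so to exclude a factor of degree $\ell$ you must show that no sub-collection of edge segments with lattice-point endpoints has total horizontal length $\ell$; a ``single long edge'' does not by itself do this unless you also control the lattice points on that edge, and in general one needs a different prime $p=p(\ell)$ for each $\ell$, not one prime handling all $\ell\le k/2$ simultaneously. Second, the existence of such primes for every $\ell\le k/2$ and every sufficiently large $k$ is precisely the theorem's content and cannot simply be ``invoked''; this is where all the work in \cite{FL} lies, including the treatment of the primes dividing $b$. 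Finally, the appeal to the reciprocal polynomial to eliminate degrees in $[k/2,k-1]$ is superfluous: any nontrivial factorization of a degree-$k$ polynomial already has a factor of degree between $1$ and $k/2$, so excluding those degrees suffices. In short, the approach is the right one, but the proposal defers every step that makes the theorem true.
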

Using Theorem \ref{th6}, we have obtained the following corollary about the zeros of Laguerre
polynomials.
\begin{corollary}\label{cor1}
Let $k\in\mathbb Z_{+}.$ Then for all but finitely many $k$, the Laguerre polynomials
$L^{n-1}_k(x)$'s have distinct zeros over the reals.
\end{corollary}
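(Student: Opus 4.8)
The plan is to deduce the corollary from the irreducibility theorem of Filaseta and Lam (Theorem \ref{th6}) together with the classical fact that each Laguerre polynomial $L_k^{n-1}$ has only real zeros. First I would apply Theorem \ref{th6} with $\alpha=n-1$: since $n$ is a positive integer, $\alpha=n-1\in\mathbb Z_+$ is a rational number that is not a negative integer, so there is a finite set $S\subset\mathbb Z_+$ such that $L_k^{n-1}(x)$ is irreducible over $\mathbb Q$ for every $k\notin S$. For such $k$ the polynomial has degree exactly $k$, and because $\mathbb Q$ has characteristic zero an irreducible polynomial is automatically separable: $\gcd(L_k^{n-1},(L_k^{n-1})')=1$, so $L_k^{n-1}$ has no repeated factor and hence $k$ pairwise distinct roots. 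Since all zeros of $L_k^{n-1}$ are real (they lie in $(0,\infty)$ by orthogonality of the Laguerre family with respect to the weight $x^{n-1}e^{-x}$), this already yields $k$ distinct real zeros for each such $L_k^{n-1}$.

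The second and essential point is that zeros are not shared between different members of the family, and here I would argue via minimal polynomials. If $j\neq k$ with $j,k\notin S$ and $x_0$ were a common zero of $L_j^{n-1}$ and $L_k^{n-1}$, then $x_0$ is algebraic over $\mathbb Q$ and, up to normalization, both polynomials coincide with the unique monic minimal polynomial of $x_0$; comparing degrees forces $j=k$, a contradiction. To also rule out coincidences with the finitely many exceptional (possibly reducible) indices in $S$, I would restrict to $k>\max(S)$: for such $k$ the minimal polynomial of any zero of $L_k^{n-1}$ has degree $k$ and therefore cannot divide any $L_j^{n-1}$ of strictly smaller degree $j<k$, so that zero is not a zero of any other member of the family. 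Discarding the finite initial segment of indices keeps us within ``all but finitely many $k$'', and for the surviving $k$ the zeros of $L_k^{n-1}$ are distinct real numbers, distinct also from the zeros of every $L_j^{n-1}$ with $j\neq k$.

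The substantive input is entirely contained in Theorem \ref{th6}; the remaining steps are standard field theory, so I expect no genuine obstacle beyond correctly bookkeeping the finite exceptional set. Concretely, one must replace the Filaseta--Lam exceptional set $S$ by its union with the initial segment $\{0,1,\dots,\max(S)\}$, which simultaneously guarantees that each surviving $L_k^{n-1}$ is irreducible (hence separable) and that its degree exceeds that of every potentially reducible member of the family; this degree comparison is precisely what forbids any shared zero and thus delivers the claimed distinctness over the reals.
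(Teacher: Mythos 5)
Your proposal is correct and follows essentially the same route as the paper: both invoke the Filaseta--Lam irreducibility theorem and then conclude that two non-associate irreducible polynomials over $\mathbb Q$ (having different degrees) cannot share a real zero --- the paper phrases this via a Bezout identity $P_1L_{k_1}^{n-1}+P_2L_{k_2}^{n-1}=1$, while you phrase it via minimal polynomials, which is the same coprimality fact. Your additional observations (separability giving simple zeros, and the degree comparison excluding common zeros with the finitely many exceptional indices) are correct refinements but do not change the substance of the argument.
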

\begin{proof}
 By Theorem \ref{th6}, there exists $k_o\in\mathbb Z_+$ such that $L_k^{n-1}$'s are
irreducible over $\mathbb Q$ whenever  $k\geq k_o.$ Therefore, we can find polynomials
$P_1, P_2\in\mathbb Q[x]$ such that $P_1L_{k_1}^{n-1}+ P_2L_{k_2}^{n-1}=1,$ over $\mathbb Q$
with $k_1,k_2\geq k_o.$ Since this identity continue to hold on $\mathbb R,$ it follows that
$L_{k_1}^{n-1}$ and $L_{k_2}^{n-1}$ have no common zero over $\mathbb R.$
\end{proof}

\begin{proposition}\label{prop4}
Let $f$ be a non-zero type function $f=\tilde{a}P\in L^2(\mathbb
C^n)$, where $P\in H_{p,q}$. Then $S(f)=P^{-1}(0)\cup F,$ where $F$
is a finite union of spheres in $\mathbb C^n.$
\end{proposition}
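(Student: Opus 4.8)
The plan is to compute $S(f)$ for the type function $f=\tilde aP$ with $P\in H_{p,q}$ by reducing the twisted spherical mean condition to a statement about a single sequence of Laguerre coefficients, and then to use Corollary \ref{cor1} to pin down the ``extra'' zeros beyond the obvious set $P^{-1}(0)$. First I would invoke Lemma \ref{lemma7}, which tells us that $S(f)=\bigcap_{k\geq0}Q_k^{-1}(0)$, where $Q_k=f\times\varphi_k^{n-1}$. (Here $f=\tilde aP$ is continuous and in $L^2$, so the hypotheses of Lemma \ref{lemma7} are in force after the usual smoothing; if one wants to be careful about continuity one convolves $\tilde a$ with a radial approximate identity as done before Theorem \ref{th4}.) The point of this step is to replace the family of spherical-mean conditions, indexed by $r$, by the cleaner family of spectral-projection conditions, indexed by $k$.

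The key simplification comes from the Hecke--Bochner identity, Lemma \ref{lemma8}. Since $P\in H_{p,q}$ is a \emph{single} bigraded harmonic, the identity gives
\[
Q_k(z)=f\times\varphi_k^{n-1}(z)=(2\pi)^{-n}\,c_k\,P(z)\,\varphi_{k-p}^{\,n+p+q-1}(z),
\]
for $k\geq p$ and $Q_k\equiv0$ for $k<p$, where $c_k=(2\pi)^{2n}\langle\tilde a,\varphi_{k-p}^{\,n+p+q-1}\rangle$ is a scalar depending only on $\tilde a$. Thus every nonzero projection $Q_k$ factors through the \emph{same} polynomial $P$ times a radial Laguerre factor. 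This immediately shows $P^{-1}(0)\subseteq S(f)$, since $P(z)=0$ forces $Q_k(z)=0$ for all $k$. For the reverse containment I would argue on the complement of $P^{-1}(0)$: if $z\notin P^{-1}(0)$, then $z\in S(f)$ forces $c_k\,\varphi_{k-p}^{\,n+p+q-1}(z)=0$ for every $k\geq p$, i.e. $c_k\,L_{k-p}^{\,n+p+q-1}(\tfrac12|z|^2)=0$ after cancelling the nowhere-vanishing Gaussian factor.

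The heart of the proof is then a statement about zeros of Laguerre polynomials. Because $f=\tilde aP\neq0$, not all of the coefficients $c_k$ vanish; let $J=\{k\geq p: c_k\neq0\}$, an infinite set (the $c_k$ are, up to normalization, the Laguerre--Fourier coefficients of the nonzero radial function $\tilde a$, so only finitely many can vanish, and in any case $J$ is infinite). For $z\notin P^{-1}(0)$ the condition $z\in S(f)$ becomes
\[
L_{k-p}^{\,n+p+q-1}\!\left(\tfrac12|z|^2\right)=0\quad\text{for every }k\in J.
\]
By Corollary \ref{cor1}, applied with the order $\alpha=n+p+q-1$, all but finitely many of the polynomials $L_m^{\,\alpha}$ have \emph{no common zero}, so the set of real numbers $s=\tfrac12|z|^2$ annihilated by \emph{all} the required Laguerre factors is finite. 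Consequently the values of $|z|$ for which $z\in S(f)\setminus P^{-1}(0)$ form a finite set of radii, so $S(f)\setminus P^{-1}(0)$ is contained in a finite union of spheres centered at the origin. Combining the two containments yields $S(f)=P^{-1}(0)\cup F$ with $F$ a finite union of spheres.

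The main obstacle is controlling the set $F$ precisely, namely showing that the common-zero set of the relevant family $\{L_m^{\,\alpha}:m\in J'\}$ (for a suitable cofinite $J'\subseteq J-p$) really is finite rather than merely closed. This is exactly where Corollary \ref{cor1} does the work: irreducibility over $\mathbb Q$ of $L_{k_1}^{\,\alpha},L_{k_2}^{\,\alpha}$ for two large indices gives a Bézout identity $P_1L_{k_1}^{\,\alpha}+P_2L_{k_2}^{\,\alpha}=1$, hence no common real zero for that pair, which already forces the common-zero set over the whole infinite family to be empty; the finitely many small exceptional indices contribute only finitely many isolated zeros. The one subtlety to check is that $\alpha=n+p+q-1$ is a nonnegative integer, hence rational and not a negative integer, so Theorem \ref{th6} and Corollary \ref{cor1} genuinely apply; and that $\tilde a\not\equiv0$ guarantees $J$ is infinite so that we are intersecting infinitely many Laguerre zero sets rather than finitely many.
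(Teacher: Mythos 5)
Your overall strategy---reduce to $S(f)=\bigcap_k Q_k^{-1}(0)$ via Lemma \ref{lemma7} and then factor each projection through the Hecke--Bochner identity as $Q_k=c_kP\varphi_{k-p}^{\,n+p+q-1}$---is exactly the paper's, and the containment $P^{-1}(0)\subseteq S(f)$ together with the identification of the extra zeros with zeros of Laguerre factors is correct. The genuine problem is the parenthetical claim that $J=\{k\geq p: c_k\neq0\}$ is infinite because ``only finitely many'' Laguerre--Fourier coefficients of a nonzero radial function can vanish. That is false: taking $\tilde a=\varphi_{j}^{\,n+p+q-1}$ for a single $j$ gives a perfectly good nonzero type function in $L^2(\mathbb C^n)$ for which exactly one coefficient $c_{p+j}$ is nonzero, so $J$ is a singleton. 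Worse, if $J$ really were always infinite, your own B\'ezout argument via Corollary \ref{cor1} would force $F=\emptyset$ for every nonzero type function, i.e.\ $S(f)=P^{-1}(0)$ always; this contradicts the same example (for $j\geq1$ the sphere $L_j^{n+p+q-1}(\tfrac12|z|^2)=0$ genuinely lies in $S(f)$), and it would render Proposition \ref{pro2}, whose whole point is the extra hypothesis that $Q_k\not\equiv0$ for all but finitely many $k$, redundant.

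The repair is immediate and shows that Corollary \ref{cor1} is not needed here at all: since $f\not\equiv0$, there is at least one $k$ with $c_k\neq0$, and already $S(f)\subseteq Q_k^{-1}(0)=P^{-1}(0)\cup\bigl(\varphi_{k-p}^{n+p+q-1}\bigr)^{-1}(0)$, where the second set is a finite union of spheres because $L_{k-p}^{n+p+q-1}$ has only finitely many zeros; combined with $P^{-1}(0)\subseteq S(f)$ and the identity $\bigcap_k\bigl(P^{-1}(0)\cup Z_k\bigr)=P^{-1}(0)\cup\bigcap_k Z_k$, this gives $S(f)=P^{-1}(0)\cup F$ with $F$ a finite union of spheres. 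This one-nonzero-coefficient observation is precisely the paper's (very short) proof. Reserve the irreducibility input (Theorem \ref{th6} and Corollary \ref{cor1}) for Proposition \ref{pro2}, where the infinitude of the set of nonvanishing projections is an explicit hypothesis and the B\'ezout argument then correctly kills $F$.
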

\begin{proof}
Since $f\not\equiv0,$ there exists at least some $k\in\mathbb Z_+$ for which
$Q_k^{-1}(0)\neq\mathbb C^n.$ Therefore,
\[Q_k^{-1}(0)=P^{-1}(0)\cup\left(\varphi_{k-p}^{n+p+q-1}\right)^{-1}(0),\]
for some $k\in\mathbb Z_+.$ Hence $S(f)=P^{-1}(0)\cup F.$
\end{proof}

\begin{proposition}\label{pro2}
Let $f=\tilde{a}P\in L^2(\mathbb C^n)$, where $P\in H_{p,q}$. Suppose $Q_k$ is not
identically zero on $\mathbb C^n,$ for all but finitely many $k.$ Then $S(f)=P^{-1}(0).$
\end{proposition}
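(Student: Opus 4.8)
The plan is to combine Proposition~\ref{prop4} with Corollary~\ref{cor1}. By Lemma~\ref{lemma7} we have $S(f)=\bigcap_{k=0}^\infty Q_k^{-1}(0)$, so the first step is to record that, by the Hecke-Bochner identity in Lemma~\ref{lemma8}, each nonvanishing projection factors as
\[
Q_k(z)=f\times\varphi_k^{n-1}(z)=(2\pi)^{-n}P(z)\,\tilde a\times\varphi_{k-p}^{n+p+q-1}(z),
\]
so that whenever $Q_k\not\equiv0$ one has $Q_k^{-1}(0)=P^{-1}(0)\cup\big(\tilde a\times\varphi_{k-p}^{n+p+q-1}\big)^{-1}(0)$. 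The radial factor $\tilde a\times\varphi_{k-p}^{n+p+q-1}$, being a radial eigenfunction of the special Hermite operator, vanishes exactly on a finite union of spheres $S_{r}(o)$, whose radii are governed by the zeros of the associated Laguerre polynomial $L_{k-p}^{n+p+q-1}$. Intersecting over all $k$ gives $S(f)=P^{-1}(0)\cup\bigcap_k F_k$, where each $F_k$ is this finite sphere-set, so the whole burden of the proof is to show the spheres drop out in the intersection.

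Next I would exploit the hypothesis that $Q_k\not\equiv0$ for all but finitely many $k$. This guarantees there are infinitely many indices $k$ for which the factorization above is genuine and the radial factor $\tilde a\times\varphi_{k-p}^{n+p+q-1}$ is nonzero, hence vanishes on a genuine finite sphere-set $F_k$ determined by the real zeros of $L_{k-p}^{\,n+p+q-1}$. The key observation is that a point $z$ lies in $\bigcap_k F_k$ only if $|z|$ is a common zero of infinitely many of these Laguerre polynomials. By Corollary~\ref{cor1}, applied with order $\alpha=n+p+q-1$ (a nonnegative integer, hence a rational non-negative integer, so the hypothesis of Theorem~\ref{th6} applies), all but finitely many of the polynomials $L_{k-p}^{\,n+p+q-1}$ are pairwise coprime over $\mathbb Q$ and therefore share no common real zero. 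Consequently no single radius $|z|=r>0$ can annihilate two of these coprime factors, so $\bigcap_k F_k$ contains no sphere of positive radius; at worst it reduces to the origin (and the origin already lies in $P^{-1}(0)$ since $P$ is a homogeneous polynomial with $p+q\geq1$, or is handled trivially in the remaining degenerate case).

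Putting the two steps together yields
\[
S(f)=\bigcap_{k=0}^\infty Q_k^{-1}(0)=P^{-1}(0)\cup\bigcap_{k} F_k=P^{-1}(0),
\]
which is the assertion. The main obstacle is the second step: I must argue carefully that the coprimality furnished by Corollary~\ref{cor1} really forces the \emph{infinite} intersection of the sphere-sets $F_k$ to be empty (away from the origin), rather than merely the intersection of two of them. The cleanest way is to fix any hypothetical common radius $r>0$ and observe that it would be a real zero of $L_{k-p}^{\,n+p+q-1}$ for infinitely many $k$, and then invoke Corollary~\ref{cor1} to produce two such indices $k_1,k_2\geq k_o$ whose Laguerre polynomials are coprime over $\mathbb R$, contradicting the existence of a common zero. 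One should also dispose of the finitely many exceptional $k$ (where $Q_k\equiv0$, contributing all of $\mathbb C^n$, and those below $k_o$) by noting they cannot enlarge the intersection beyond $P^{-1}(0)$ once infinitely many good indices are present.
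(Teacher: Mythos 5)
Your argument follows the same route as the paper's proof: reduce to $S(f)=\bigcap_k Q_k^{-1}(0)$ via Lemma \ref{lemma7}, factor each nonzero $Q_k$ through the Hecke--Bochner identity into $P$ times a radial Laguerre factor, and invoke Corollary \ref{cor1} to rule out a common zero of infinitely many of the polynomials $L_{k-p}^{n+p+q-1}$. Your version is only more explicit about why the sphere-sets drop out of the infinite intersection and about the (harmless) role of the origin, so it is essentially the paper's proof.
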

\begin{proof}
Since $Q_k(z)=f\times\varphi_k^{n-1}.$ Then by Lemma \ref{lemma7},
we have $S(f)=\cap_{k=0}^\infty Q_k^{-1}(0).$ By Hecke-Bochner
identity, we can write
\[Q_k(z)=(2\pi)^{n}\left\langle\tilde a, \varphi_{k-p}^{n+p+q-1}\right\rangle P(z)\varphi_{k-p}^{n+p+q-1}(z).\]
Since $Q_k\not\equiv0$ for infinitely many $k\in\mathbb Z_+.$
Therefore,
\[Q_k^{-1}(0)=P^{-1}(0)\cup\left(\varphi_{k-p}^{n+p+q-1}\right)^{-1}(0)\neq\mathbb C^n,\]
for infinitely many $k.$ In view of Corollary \ref{cor1}, the functions
$\varphi_{k-p}^{n+p+q-1}$'s can not have a common zero except for finitely many
$k\in\mathbb Z_+$ with $k\geq p.$ Hence, we conclude that $S(f)=P^{-1}(0).$
\end{proof}

Now, we would like to address the problem in the higher dimensional space $\mathbb C^n$ with $n\geq2.$
Let $f\in L^2(\mathbb C^n).$ Consider the spherical harmonic decomposition of $f$ as
\begin{equation}\label{exp24}
 f(z)=\sum_{p=0}^\infty\sum_{q=0}^\infty\sum_{j=1}^{d(p,q)}\tilde a_j^{pq}(|z|)P_{pq}^j(z).
\end{equation}
In view of the Hecke-Bochner identities (\ref{exp2}), we conclude that
\begin{eqnarray*}
f\times\varphi_{k_0}^{n-1}
&=&\sum_{p=0}^{k_0}\sum_{q=0}^\infty\sum_{j=1}^{d(p,q)}C_{k_0-p,j}^{pq}P_{pq}^j\varphi_{k_0-p}^{n+p+q-1}\\
&=&\sum_{p=0}^{k_0}\sum_{q=0}^\infty P_{pq}^{k_0}\varphi_{k_0-p}^{n+p+q-1}, \text{ where }P_{pq}^{k_0}\in H_{p,q}.
\end{eqnarray*}
Now look at the following concrete expression for the spectral projections
\begin{equation}\label{exp8}
 Q_k(z)=\sum_{p=0}^{k}\sum_{q=0}^\infty P_{pq}^{k}(z)\varphi_{k-p}^{n+p+q-1}(z),~P_{pq}^{k}\in H_{p,q}.
\end{equation}

\begin{remark}\label{rk4}
$(a).$
As very much similar to complex plane $\mathbb C,$ our believe suggest that any Coxeter
system of hyperplanes can be a set of injectivity for the TSM on $\mathbb C^n.$
For instance on $\mathbb C^2,$ suppose the function
\[Q(z_1, z_2)=\sum_{p=1}^2\left(a_pz_1^p+b_pz_2^p\right) + \sum_{q=1}^2\left(c_q\bar{z_1}^q+d_q\bar{z_2}^q\right)\]
vanishes on each of co-ordinate axis, i.e.,
$Q(x,0)=Q(ix,0)=Q(0,x)=Q(0,ix)=0, \forall x\in\mathbb R.$ Then
$Q\equiv0.$ As another example, consider a typical polynomial
$P(z_1,z_2)=c z_1^p\bar z_2^q\in H_{p,q}.$ Suppose $P(z_1,x_2)=0,
\forall z_1\in\mathbb C$ and $\forall x_2\in\mathbb R.$ Then
$P\equiv0.$ In view of these arguments, write
\[S=(\mathbb C\times\mathbb R)\cup(\mathbb C\times i\mathbb R)\cup(\mathbb R\times\mathbb C)
\cup (i\mathbb R\times\mathbb C).\]
It is natural to ask, whether the set $S$ can be a set of injectivity for the TSM on
$L^q(\mathbb C^2),$ for $1\leq q\leq2.$

\bigskip

$(b).$ Let us rewrite $Q_kf=f\times\varphi_k^{n-1}.$ From the
explicit expression of $Q_k,$ given by (\ref{exp8}), it follows that
\begin{equation}\label{exp23}
\left\|Q_kf\right\|_2^2=\sum_{p=0}^k\sum_{q=0}^\infty\left\|Y_{pq}^k(f)\right\|_2^2
\left\|\varphi_{k-p}^{n+p+q-1}\right\|_2^2,
\end{equation}
where $Y_{pq}^k(f)$'s are spherical harmonics depending upon $f.$ In the work \cite{SZ},
Stempak and Zienkiewic have established that for $f\in L^r(\mathbb C^n),$ with
$1\leq r<\frac{2(2n+1)}{2n+3},$ the operators $Q_k$ satisfy the estimate
$\|Q_kf\|_2\leq C_k\|f\|_r.$ On the basis of the equality (\ref{exp23}), it is
natural to ask, whether the map $f\rightarrow f\times\varphi_k^{n-1}$ would satisfy
an end point estimate.

\bigskip

$(c).$  Let $\mu$ be a finite Borel measure which is supported on a
curve $\gamma$ and $S$ be a non-empty set in $\mathbb R^2.$ Then the
pair $(\gamma, S)$ is called a Heisenberg Uniqueness pairs (HUP) for
$\mu$ if its Fourier transform $\hat\mu(x,y)=0,~\forall~(x,y)\in S,$
implies $\mu=0.$ In a recent work \cite{HR}, Hedenmalm et al. prove
the following result. Suppose $\mu$ is supported on the hyperbola
$\gamma=\{(x,y): ~xy=1\}$ and $\hat\mu$ vanishes on the lattice-cross
$S=\alpha\mathbb Z\times\{0\}\cup\{0\}\times\beta\mathbb Z.$ Then $\mu=0$
if and only if $\alpha\beta\leq1,$ where $\alpha, \beta\in\mathbb R_+.$
This is a variance of uncertainty principle for Fourier transform.
In view of this and the fact that $\varphi_k^0\times\mu$ is real
analytic, it is natural to ask the following question. Let $\mu$ be a
 finite measure supported on a real analytic curve $\gamma$ and $S$ be
a non-empty set in $\mathbb C.$ Then find all those non-trivial pair
$(\gamma, S)$ such that $\varphi_k^0\times\mu(z)=0, ~\forall z\in S$
and $\forall k\geq0.$ Implies $\mu=0.$ Here, we skip to write further
details about these ideas and they might be appear in the successive
work.

\end{remark}

\bigskip

\noindent{\bf Concluding remarks:}
We would like to point out the key motivation behind  $X$-axis together with
$Y$-axis is a set of injectivity for the TSM on $\mathbb C.$ Consider the function
$Q(z)=c_0z+c_1\bar z+c_2z^3+c_3\bar z^3.$ Suppose $Q(x)=Q(ix)=0, \forall x\in\mathbb R.$
Then $Q\equiv0.$ This result can also be interpreted on the Heisenberg group for the
spherical means given by (\ref{exp22}).
The set $\tilde\Sigma_{2N}=\cup_{l=0}^{2N-1}\{(\omega^lx,t): x, t\in\mathbb R \}$
is a set of injectivity for the spherical means on $\mathbb H^1.$

\bigskip

\noindent{\bf Acknowledgements:}
The author wishes to thank E. K. Narayanan for several fruitful discussions, specially,
during my short visit to IISc, Bangalore. The author would like to extend a sincere thank
to M. L. Agranovsky for his reasonable suggestion and remarks. The author would also like
to gratefully acknowledge the support provided by the Department of Atomic Energy,
government of India.

\end{document}